\numberwithin{equation}{section}
\newtheorem{theorem}{Theorem}[section]
\newtheorem{lemma}{Lemma}[section]
\newtheorem{proposition}[theorem]{Proposition}
\newtheorem{definition}[theorem]{Definition}
\newtheorem{remark}[theorem]{Remark}
\newcommand{\bbm}{\begin{bmatrix}}
\newcommand{\ebm}{\end{bmatrix}}
\DeclareMathOperator{\tr}{tr}
\begin{document}

\title{The Wu-Yau theorem on Sasakian manifolds }


\author{Yong Chen}
\date{}
\address{School of Mathematical Sciences, University of Science and Technology of China, Hefei, 230026, P.R. China}
\email{cybwv988@163.com}

\maketitle

\begin{abstract}
In this note, We proved that a compact Sasakian manifold $(M, \xi, \eta, \Phi, g)$ with negative transverse holomorphic sectional curvature must have has a Sasakian structure $(\xi, \eta^{'}, \Phi^{'}, g^{'})$ with negative transverse Ricci curvature. Similarly, a compact Sasakian manifold with nonpositive transverse holomorphic sectional curvature, then the negative first basic Chern class $-c^{B}_{1}(M, \mathcal{F_\xi})$ is transverse nef and we have the Miyaoka-Yau type inequality. When transverse holomorphic sectional curvature is quasi-negative, we obtain a Chern number inequality.
\end{abstract}

\section{Introduction}
         The study of holomorphic sectional curvature in K\"ahler geometry has been a classical topic, and it attracted many attention in recent years. The recent breakthrough of Wu and Yau \cite{WY1} in which they proved that any projective manifolds with negative holomorphic sectional curvature must have an ample canonical line bundle. Tosatti and Yang \cite{TY} proved that any compact K\"ahler manifold with nonpositive holomorphic sectional curvature must have a nef canonical line bundle and then extended Wu-Yau's work to the K\"ahler case. More recently, Diverio and Trapani \cite{DT} further generalized the result by assuming that the holomorphic sectional curvature is only quasi-negative. In \cite{WY2}, Wu and Yau give a direct proof of the statement that any compact K\"ahler manifold with quasi-negative holomorphic sectional curvature must have an ample canonical line bundle.
         
         Inspired by the Kobayashi conjecture and the above results, it was conjectured that any Hermitian manifolds with quasi-negative(nonpositive) holomorphic sectional curvature must have an ample(nef) canonical line bundle. To the best of author's knowledge, the Wu-Yau's theorem in Hermitian case is still widely open. There are some related works about this problem, readers can refer to \cite{Lee, YZ}.
         
         An odd dimensional Riemannian manifold $(M, g)$ is said to be a Sasakian manifold if the cone manifold $(C(M), \bar{g}) = (M\times \mathbb{R}^{+}, dr^{2} + r^{2}g)$ is K\"ahler. Sasakian geometry was introduced by Sasaki \cite{Sa} in 1960s and is often described as an odd dimensional counterparts of K\"ahler geometry. Sasakian manifold has received a lot of attention because it is the natural intersection of CR, contact and Riemannian geometry, and plays a very important role in Riemannian, algebraic geometry and in physics. Sasakian manifolds first appeared in String theory in \cite{Ma}. Sasaki-Einstein metric is useful in Ads/CFT correspondence. In this paper, we would consider the theorem of Wu-Yau type on Sasakian manifolds. Sasakian manifolds can be studies from may view points as they have many structures. They have a natural foliation structure $\mathcal{F_{\xi}}$, called Reeb foliation, which has a transverse K\"ahler structure; they also has a contact structure. Sasakian geometry is a special kind of contact metric geometry. There exists a unique transverse connection $\nabla^{T}$ corresponding to the Sasakian structure. A good reference on Sasakian geometry can be found in the monograph \cite{BG3} by Boyer and Galicki. We can also define the transverse holomorphic sectional curvature (see section 2). In K\"ahler geometry, The Wu-Yau theorem tell us the negativity of  holomorphic sectional curvature would  lead to the positivity of canonical bundle $K_{M}$, which is equivalent to the positivity of the negative first Chern class $c_{1}(M).$ Similarly, We have the theorems on Sasakian manifold.

%
  
\begin{theorem}\label{Th1}
Let $(M, \xi, \eta, \Phi, g)$ be a 2n + 1 dimensional compact Sasakian manifold with negative transverse holomorphic sectional curvature, then the first basic Chern class $c^{B}_{1}(M, \mathcal{F_\xi})$ is negative. By the transverse Calabi-Yau theorem, $M$ has a Sasakian structure $(\xi, \eta^{'}, \Phi^{'}, g^{'})$ with negative transverse Ricci curvature, which is compatible with $(\xi, \eta, \Phi, g)$.
\end{theorem}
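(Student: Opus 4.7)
The plan is to adapt the Wu--Yau continuity method to the transverse Kähler setting provided by the Reeb foliation $\mathcal{F}_\xi$. Let $\omega^T$ denote the transverse Kähler form of $(\xi, \eta, \Phi, g)$ and $\rho^T$ its transverse Ricci form, which represents (up to the usual normalization) $c_1^B(M, \mathcal{F}_\xi)$. First I would set up the family of basic complex Monge-Ampère equations, for $t \in (0,1]$,
\begin{equation*}
\bigl( -\rho^T + t\,\omega^T + \sqrt{-1}\,\partial_B \bar{\partial}_B u_t \bigr)^n \;=\; e^{\,t u_t + F}\, (\omega^T)^n,
\end{equation*}
with $F$ a basic function fixed so that the right side has the correct integral. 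The transverse Calabi-Yau theorem of El Kacimi-Alaoui guarantees openness/solvability along this path on a compact Sasakian manifold, provided uniform a priori estimates hold.

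Second, I would establish the transverse analogue of Wu and Yau's key $C^{0}$ estimate. The crucial ingredient is a transverse Schwarz-lemma (Royden/Yang-Zheng) inequality: writing $\omega_t = -\rho^T + t\omega^T + \sqrt{-1}\partial_B\bar{\partial}_B u_t$ and using that the transverse holomorphic sectional curvature is bounded above by $-\kappa < 0$ by compactness, the basic Laplacian of $\operatorname{tr}_{\omega_t}\omega^T$ produces, at an interior maximum, a Wu--Yau style inequality that bounds $\operatorname{tr}_{\omega_t}\omega^T$ from above independently of $t$. Combined with the transverse Yau estimate, this yields a uniform $C^{0}$ bound on $u_t$, and hence via the standard higher-order transverse estimates a uniform smooth bound.

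Third, letting $t \to 0^+$, the uniform estimates produce a closed basic $(1,1)$-current $\omega_0 = -\rho^T + \sqrt{-1}\partial_B\bar{\partial}_B u_0 \ge 0$, which shows that $-c_1^B(M, \mathcal{F}_\xi)$ is transverse nef. To upgrade nefness to strict negativity of $c_1^B(M, \mathcal{F}_\xi)$, I would exploit the strict negativity of the transverse HSC: the quantitative Schwarz estimate gives, for some $\epsilon>0$ depending on $\kappa$, a uniform lower bound $\omega_t \ge \epsilon\,\omega^T$, so the limit class $-c_1^B(M,\mathcal{F}_\xi)$ contains a strictly positive smooth basic representative.

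Finally, the existence of the compatible Sasakian structure $(\xi, \eta', \Phi', g')$ with negative transverse Ricci curvature follows from the transverse Calabi-Yau theorem of El Kacimi-Alaoui applied to the negative basic class just obtained: one solves a transverse Monge-Ampère equation to find a basic potential $\varphi$ with $\omega^T + \sqrt{-1}\partial_B\bar{\partial}_B \varphi$ equal to a prescribed negative representative, and the associated transverse (type I) deformation of the Sasakian structure has transverse Ricci form equal to that negative representative. The main obstacle I expect is Step 2: one must verify that Wu and Yau's maximum-principle computation goes through with the basic Laplacian and the transverse Chern connection, so that no spurious Reeb-direction contributions enter the curvature identities used in the Schwarz estimate.
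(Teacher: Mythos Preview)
Your outline is essentially the Wu--Yau continuity method the paper uses, and your core step---the transverse Schwarz/Royden computation bounding $\tr_{\omega_t}\omega^T$ uniformly, hence $\omega_t \ge \epsilon\,\omega^T$---is exactly what the paper proves. There is, however, a real issue in your $C^0$ step. The paper (following Wu--Yau) writes the right-hand side as $e^{u_t}(d\eta)^n\wedge\eta$, not $e^{t u_t + F}(\omega^T)^n$: with $e^{u_t}$, the maximum principle at the point where $u_t$ attains its maximum bounds $\sup_M u_t$ directly from the equation, and then the Schwarz bound combined with the elementary trace inequality yields the two-sided equivalence $C^{-1}d\eta \le \sigma(t) \le C\,d\eta$ and hence $\inf_M u_t \ge -C$. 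With your $e^{tu_t}$, the maximum principle only controls $t\,u_t$, so the asserted ``uniform $C^0$ bound on $u_t$'' breaks down as $t \to 0$; you would have to normalize and control the oscillation of $u_t$ by a separate argument, which your sketch does not supply. Once you switch to $e^{u_t}$ this difficulty disappears, the continuity set is closed at $t=0$, and one obtains a strictly positive basic representative of $-c_1^B(M,\mathcal{F}_\xi)$ directly---making the intermediate passage through nefness unnecessary. A smaller point: openness along the path is obtained by inverting the linearization $h \mapsto \Delta_{\sigma(t_0)} h - h$ via the maximum principle, not by invoking the transverse Calabi--Yau theorem itself.
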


In complex geometry, a (1, 1) class $\alpha \in H^{(1, 1)}(M, \mathbb{R})$ on a K\"ahler manifold $(M, \omega)$ is called nef if for every $\epsilon > 0$ there exists a smooth (1, 1)-form $\theta_{\epsilon} \in \alpha$ on $M$ such that  satisfies
\[ \theta_{\epsilon} \ge -\epsilon\omega. \ \]

We will give the definition of nefness in Sasakian geometry in Section 2 following the definition of nefness in complex geometry.

\begin{theorem}\label{Th2}
Let $(M, \xi, \eta, \Phi, g)$ be a 2n + 1 dimensional compact  Sasakian manifold with nonpositive transverse holomorphic sectional curvature, then the negative first basic Chern class $-c^{B}_{1}(M, \mathcal{F_\xi})$ is transverse nef.
\end{theorem}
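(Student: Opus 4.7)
The plan is to adapt the Tosatti-Yang argument \cite{TY} to the transverse K\"ahler geometry of the Reeb foliation, replacing the Aubin-Yau theorem by the transverse complex Monge-Amp\`ere theory (El-Kacimi-Alaoui), the Chern connection by the transverse connection $\nabla^T$, and ordinary $(1,1)$-forms by basic ones throughout.

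First I would fix the transverse K\"ahler form $\omega^{T}$ associated to $(\xi,\eta,\Phi,g)$ and let $\rho^{T}$ be its transverse Ricci form, so that $[\rho^{T}] = 2\pi\, c^{B}_{1}(M,\mathcal{F_\xi})$ in basic cohomology. For each $\epsilon>0$ I would consider the family of twisted transverse Monge-Amp\`ere equations for basic functions $u_\epsilon$,
\[
\bigl(-\rho^{T} + \epsilon\,\omega^{T} + \sqrt{-1}\,\partial_{B}\bar\partial_{B} u_\epsilon\bigr)^{n} \;=\; e^{u_\epsilon/\epsilon + F}\,(\omega^{T})^{n},
\]
where $F$ is a fixed basic function normalizing the two sides. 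Writing $\tilde\omega^{T}_\epsilon := -\rho^{T}+\epsilon\,\omega^{T}+\sqrt{-1}\,\partial_{B}\bar\partial_{B} u_\epsilon$, this is the transverse analogue of the Tosatti-Yang equation; by the continuity method its solvability reduces to a uniform a priori $C^{0}$ bound on $u_\epsilon$.

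The crux of the proof is that estimate. Following the Schwarz-lemma route of Royden and Wu-Yau, I would compute the transverse Laplacian, with respect to $\tilde\omega^{T}_\epsilon$, of a barrier such as $\log\mathrm{tr}_{\tilde\omega^{T}_\epsilon}(\omega^{T}) - A u_\epsilon$ and evaluate at its maximum. The curvature contribution is a contraction of the transverse curvature tensor of $\omega^{T}$ against $(\tilde\omega^{T}_\epsilon)^{-1}\otimes\omega^{T}$; by Royden's polarization identity transplanted to the transverse setting, this contraction is controlled from above by the transverse holomorphic sectional curvature, which is nonpositive by hypothesis. Together with the coercive $\epsilon\,\omega^{T}$ term and the $e^{u_\epsilon/\epsilon}$ factor in the right-hand side, this produces a uniform bound on $u_\epsilon$; transversely elliptic regularity then yields higher-order estimates and smoothness.

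Once $u_\epsilon$ exists with $\tilde\omega^{T}_\epsilon>0$, the basic $(1,1)$-form
\[
\theta_\epsilon \;:=\; \tilde\omega^{T}_\epsilon - \epsilon\,\omega^{T} \;=\; -\rho^{T} + \sqrt{-1}\,\partial_{B}\bar\partial_{B} u_\epsilon
\]
represents $-c^{B}_{1}(M,\mathcal{F_\xi})$ and satisfies $\theta_\epsilon \geq -\epsilon\,\omega^{T}$, which is precisely the transverse nefness condition of Section~2. The step I expect to be the main obstacle is faithfully porting the Wu-Yau/Royden maximum-principle computation into the foliated setting: one must check that the chosen barrier, its transverse covariant derivatives, and the resulting Bochner-type identities live naturally among basic tensors for $\mathcal{F_\xi}$, and that the Reeb direction contributes no spurious terms that could swamp the favorable sign coming from the nonpositivity of the transverse holomorphic sectional curvature. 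Once this foliated bookkeeping is verified, the argument runs in parallel with the K\"ahler proof.
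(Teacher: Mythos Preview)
Your strategy is the same transverse Wu--Yau/Tosatti--Yang machine the paper runs: solve a twisted transverse Monge--Amp\`ere equation in the class $[-\rho^{T}+\epsilon\,d\eta]$ and control $\mathrm{tr}_{\tilde\omega}\omega^{T}$ via the Royden/Schwarz computation, using nonpositivity of $H^{T}$. The paper, however, does not solve directly for each $\epsilon>0$; it argues by contradiction. It sets $\epsilon_{0}=\inf\{\epsilon>0:[-\rho^{T}+\epsilon\,d\eta]\ \text{contains a positive basic form}\}$, assumes $\epsilon_{0}>0$, and solves
\[
\bigl((\epsilon+\epsilon_{0})\,d\eta-\rho^{T}+\sqrt{-1}\,\partial_{B}\bar\partial_{B}u_{\epsilon}\bigr)^{n}\wedge\eta
= e^{u_{\epsilon}}(d\eta)^{n}\wedge\eta.
\]
The point of the contradiction device is that with $\kappa=0$ the Schwarz inequality becomes $\Delta_{\sigma}\log\mathrm{tr}_{\sigma}d\eta\ge -1+\tfrac{\epsilon_{0}}{n}\,\mathrm{tr}_{\sigma}d\eta$, so it is $\epsilon_{0}$ (not $\epsilon$) that supplies the coercivity and all estimates are uniform in $\epsilon$; passing $\epsilon\to 0$ yields a strictly positive representative of $[-\rho^{T}+\epsilon_{0}\,d\eta]$, contradicting the infimum.

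Your direct route is also valid in principle, since nefness only requires a positive form in $[-\rho^{T}+\epsilon\,d\eta]$ for each fixed $\epsilon$ and the bounds may depend on $\epsilon$; one simply runs the continuity path in $t$ from $t_{1}$ large down to $t=\epsilon$ and uses $\tfrac{t}{n}\ge\tfrac{\epsilon}{n}$ for coercivity. Two technical choices in your write-up differ from the paper and deserve a second look. First, the right-hand side $e^{u_{\epsilon}/\epsilon+F}$ is nonstandard; the paper uses $e^{u_{\epsilon}}$, and that choice yields the clean Ricci identity $-\sqrt{-1}\,\partial_{B}\bar\partial_{B}\log\det\sigma = t\,d\eta-\sigma$, which is exactly what produces the $\tfrac{t}{n}(\mathrm{tr}_{\sigma}d\eta)^{2}$ term in the Schwarz estimate. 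Second, the barrier $\log\mathrm{tr}_{\tilde\omega}\omega^{T}-Au_{\epsilon}$ is unnecessary here: with Royden's lemma and the equation's built-in Ricci relation one bounds $\log\mathrm{tr}_{\sigma}d\eta$ directly by the maximum principle, without coupling to $u_{\epsilon}$. Neither point is fatal, but aligning them with the paper's choices makes the foliated bookkeeping you were worried about essentially disappear.
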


\begin{theorem}\label{Th3}
Let $(M, \xi, \eta, \Phi, g)$ be a 2n + 1 dimensional compact  Sasakian manifold with quasi-positive transverse holomorphic sectional curvature, then we have the basic Chern number inequality
\begin{equation}
\int_{M} (- c_{1}^{B}(M, \mathcal{F_{\xi}}))^{n}\wedge \eta > 0.
\end{equation}  
\end{theorem}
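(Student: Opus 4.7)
The plan is to adapt the Diverio-Trapani approach from the K\"ahler setting to the transverse K\"ahler geometry of a Sasakian manifold, building directly on Theorem \ref{Th2}. That earlier theorem already says that, under the non-strict version of the curvature hypothesis, $-c_1^B(M,\mathcal F_\xi)$ is transverse nef. Theorem \ref{Th3} then needs to upgrade ``transverse nef'' to ``transverse big'', i.e.\ that the top basic self-intersection is strictly positive, by exploiting the strict one-sidedness of the transverse holomorphic sectional curvature at a point $p_0$.

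First I would fix a transverse K\"ahler form $\omega^T$ and a basic smooth function $F$ so that $\sqrt{-1}\,\partial_B\bar\partial_B F - \mathrm{Ric}^T(\omega^T)$ represents $-c_1^B(M,\mathcal F_\xi)$, and then set up the transverse Monge-Amp\`ere continuity family
\[
(\omega^T+\sqrt{-1}\,\partial_B\bar\partial_B u_t)^n = e^{t u_t+F}(\omega^T)^n,\qquad t>0,
\]
for basic unknown $u_t$. By El Kacimi-Alaoui's transverse Calabi-Yau theorem and the a priori estimates already used in the proof of Theorem \ref{Th2}, the equation has a smooth basic solution for each $t>0$, producing a family of transverse K\"ahler metrics $\omega^T_{u_t}:=\omega^T+\sqrt{-1}\,\partial_B\bar\partial_B u_t$ whose cohomology class converges to $-c_1^B(M,\mathcal F_\xi)$ as $t\to 0^+$.

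The key new step, following Wu-Yau's second paper and Diverio-Trapani, is a maximum-principle calculation at $p_0$. One would examine a quantity of the shape $\log\mathrm{tr}_{\omega^T_{u_t}}\omega^T - A u_t$, augmented by a local barrier concentrated near $p_0$, and differentiate using the transverse Chern-Lu / Schwarz inequality. The strict one-sided bound on the transverse holomorphic sectional curvature at $p_0$ (as opposed to the merely non-strict bound elsewhere, which only delivered nefness in Theorem \ref{Th2}) then produces a lower bound
\[
\int_M (\omega^T_{u_t})^n\wedge\eta \;\ge\; c_0 \;>\;0
\]
that is independent of $t$. Letting $t\to 0^+$ and combining with the nef representatives furnished by Theorem \ref{Th2}, one concludes that the top transverse Monge-Amp\`ere mass of $-c_1^B(M,\mathcal F_\xi)$ is strictly positive, which is exactly the claimed integral inequality.

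The hard part will be carrying the Diverio-Trapani barrier argument through while staying inside the basic/foliated category: the Monge-Amp\`ere equation, the transverse Laplacian, the cutoff, and the second-order Schwarz-type inequality all have to respect the Reeb foliation $\mathcal F_\xi$, and the local transverse barrier near $p_0$ has to glue to a genuine basic function on $M$ (not only on a foliation chart). A secondary but real technical difficulty is the $t\to 0^+$ passage to the limit, which requires the basic Bedford-Taylor / pluripotential theory on a transverse K\"ahler foliation to be invoked with enough care that the limiting top self-intersection can be identified with the basic Chern number $\int_M(-c_1^B(M,\mathcal F_\xi))^n\wedge\eta$.
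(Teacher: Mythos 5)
Your overall strategy --- run a continuity family of transverse Monge--Amp\`ere equations and use the strict negativity of the transverse holomorphic sectional curvature somewhere to keep the volume from collapsing as $t\to0^{+}$ --- is the right one, but as written there are two genuine gaps. First, your family is normalized so that the metrics $\omega^{T}_{u_t}=\omega^{T}+\sqrt{-1}\,\partial_{B}\bar{\partial}_{B}u_t$ all lie in the \emph{fixed} basic class $[\omega^{T}]$. By Stokes' theorem (the boundary term $\bar{\partial}_{B}u_t\wedge\beta\wedge d\eta$ is a wedge of $2n+1$ horizontal covectors and hence vanishes identically) one has $\int_{M}(\omega^{T}_{u_t})^{n}\wedge\eta=\int_{M}(\omega^{T})^{n}\wedge\eta$ for every $t$, so the ``uniform lower bound'' you announce is an identity that holds with no curvature hypothesis and carries no information whatsoever about $\int_{M}(-c_{1}^{B}(M,\mathcal{F_\xi}))^{n}\wedge\eta$. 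The paper instead works with $\sigma(t)=t\,d\eta-\rho^{T}+\sqrt{-1}\,\partial_{B}\bar{\partial}_{B}u_t$ normalized by $\sigma(t)^{n}\wedge\eta=e^{u_t}(d\eta)^{n}\wedge\eta$; then $[\sigma(t)]$ tends to the class of $-\rho^{T}$ and the whole theorem reduces to showing $\liminf_{t\to0}\int_{M}e^{u_t}(d\eta)^{n}\wedge\eta>0$. You would have to re-normalize your family in this way (or otherwise manufacture almost-positive representatives of $-c_{1}^{B}$ whose volumes you actually control) before any estimate can bite.

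Second, the step that is supposed to use quasi-negativity is only named, not performed, and it is not the route the paper takes. The paper uses no Diverio--Trapani local barrier and no second maximum principle at $p_0$: it integrates the Royden/Chern--Lu inequality (\ref{key}) against the Monge--Amp\`ere measure $\sigma(t)^{n}\wedge\eta$ (the Laplacian term integrates to zero), which after the arithmetic--geometric mean inequality $\tr_{\sigma(t)}d\eta\ge n\,e^{-u_t/n}$ yields
\[
\exp\Bigl(-\tfrac{1}{n}\max_{M}u_t\Bigr)\;\le\;\frac{\int_{M}e^{\,u_t-\max_{M}u_t-1}(d\eta)^{n}\wedge\eta}{\tfrac{n+1}{2}\int_{M}\kappa\,e^{\,u_t-\max_{M}u_t-1}(d\eta)^{n}\wedge\eta},
\]
where $\kappa\ge0$ is the pointwise negativity of the curvature. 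The denominator is kept away from zero by Zhang's exponential estimate for basic plurisubharmonic functions together with the $W^{1,2}_{B}$-compactness of $\log(1+\max_{M}u_t-u_t)$ (Lemma \ref{Lem 5.3}) and dominated convergence; quasi-negativity enters only through $\kappa>0$ on an open set. This bounds $\max_{M}u_t$ from below and hence bounds $\int_{M}e^{u_t}(d\eta)^{n}\wedge\eta$ away from zero. If you insist on the barrier route you must actually construct the barrier as a global basic function and carry out the maximum-principle computation; as it stands, the assertion that strict negativity at $p_0$ ``produces'' the needed lower bound is a statement of intent, not a proof.
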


There are many results about Miyaoka-Yau inequality for Chern numbers inequality in K\"ahler case; an incomplete list: $K_{X}$ is ample \cite{Yau2}, minimal manifolds of general type \cite{ZYG}, minimal projective varieties \cite{GB}, and compact K\"ahler manifolds whose $c_{1}(K_{X})$ admits a smooth semipositive representative \cite{NR}, compact K\"ahler manifolds with almost nonpositive holomorphic sectional curvature \cite{ZYS}. In Sasakian case, Zhang, Xi proved  the Miyaoka-Yau inequality for basic Chern numbers holds on compact Sasakian-Einstein manifolds \cite{Z2}. We proved the Miyaoka-Yau inequality for basic Chern numbers holds on compact Sasakian manifolds with nonpositive transverse holomorphic sectional curvature. 
\begin{theorem}\label{TH4}
Let $(M, \xi, \eta, \Phi, g)$ be a 2n + 1 dimensional compact Sasakian manifold with nonpositive transverse holomorphic sectional curvature, then we have the following Miyaoka-Yau type inequlity:
\begin{equation}
\int_{M}(2c_{2}^{B}(M, \mathcal{F_{\xi}}) - \frac{n}{n + 1}c_{1}^{B}(M, \mathcal{F_{\xi}})^{2})\wedge(- c_{1}^{B}(M, \mathcal{F_{\xi}}))^{n - 2}\wedge\eta \ge 0
\end{equation}
\end{theorem}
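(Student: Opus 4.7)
The strategy is to transport to the transverse K\"ahler setting the approach used by Zheng--Yang \cite{ZYS} for compact K\"ahler manifolds with nonpositive holomorphic sectional curvature, using the transverse nefness of $-c_1^B(M,\mathcal{F}_\xi)$ from Theorem \ref{Th2} as the input and wedging basic $(n,n)$-forms with $\eta$ to integrate over $M$.

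The first step is to construct an approximating family of transverse K\"ahler metrics. Combining Theorem \ref{Th2} with the transverse Aubin--Yau/Calabi--Yau theorem (the same tool underlying Theorem \ref{Th1}), for each sufficiently small $\epsilon > 0$ I would produce a transverse K\"ahler form $\omega_\epsilon^T$ in the basic class $-c_1^B(M,\mathcal{F}_\xi) + \epsilon[\omega^T]$ solving the twisted transverse K\"ahler--Einstein equation $\mathrm{Ric}^T(\omega_\epsilon^T) = -\omega_\epsilon^T + \epsilon\,\omega^T$, where $\omega^T$ is a fixed background transverse K\"ahler form. This is the Sasakian analogue of the twisted K\"ahler--Einstein family used by Tosatti--Yang and Zheng--Yang in the K\"ahler case.

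The second step is pointwise and algebraic. For each $\omega_\epsilon^T$, the hypothesis $H^T \le 0$ together with the prescribed transverse Ricci should yield the pointwise basic $(n,n)$-inequality
\[
\left(2\, c_2^B(\omega_\epsilon^T) - \tfrac{n}{n+1}\, c_1^B(\omega_\epsilon^T)^2\right) \wedge (\omega_\epsilon^T)^{n-2} \ge 0
\]
on $M$. The proof is a Chen--Ogiue/Berger-type expansion: write the left-hand side in the components of the transverse Chern curvature tensor $R^T$ on $\nu(\mathcal{F}_\xi)$, then use the Berger averaging formula on transverse holomorphic planes, combined with $H^T \le 0$, to control the trace-free part of $R^T$ by the (twisted Einstein) Ricci.

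Finally, wedging with $\eta$, integrating over $M$, and reading the result as a basic cohomology pairing with Chern--Weil representatives gives
\[
\int_M \left(2\, c_2^B - \tfrac{n}{n+1}\,(c_1^B)^2\right) \wedge \left(-c_1^B + \epsilon[\omega^T]\right)^{n-2} \wedge \eta \ge 0,
\]
and letting $\epsilon \to 0$ yields the stated inequality. The main obstacle will be the pointwise algebraic step: verifying that the Chern--Weil representatives built from the transverse Chern connection on $\nu(\mathcal{F}_\xi)$ reproduce the K\"ahler Miyaoka--Yau identity verbatim, and that the Berger-type bound on transverse holomorphic planes yields the same sign as in the K\"ahler case. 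The transverse Aubin--Yau input and the integration against $\eta$ are then routine within the Sasakian framework.
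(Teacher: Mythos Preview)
Your Steps 1 and 3 match the paper's framework: the paper likewise builds the twisted transverse K\"ahler--Einstein family $\sigma(t)$ solving $\rho^{T}(\sigma(t)) = -\sigma(t) + t\,d\eta$ (your $\omega_\epsilon^T$ with $\epsilon = t$), wedges with $\eta$, and lets $t\to 0$. The gap is in your Step 2.

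The hypothesis $H^{T}\le 0$ is a property of the \emph{background} Sasakian structure $(\xi,\eta,\Phi,g)$, whereas the Chern--Weil representatives of $c_1^B,c_2^B$ in the integrand $(2c_2^B-\frac{n}{n+1}(c_1^B)^2)\wedge(\omega_\epsilon^T)^{n-2}$ are built from the transverse curvature of $\omega_\epsilon^T$. There is no reason the twisted metrics $\omega_\epsilon^T$ should have nonpositive transverse holomorphic sectional curvature, so a Berger-type averaging over transverse holomorphic planes of $\omega_\epsilon^T$ yields no sign. Consequently the pointwise inequality you state in Step 2 is not available by the mechanism you describe.

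The paper's algebraic step is different and does \emph{not} use $H^{T}\le 0$ at all. It applies the Chen--Ogiue identity (valid for any transverse K\"ahler metric) to rewrite the integrand against $\sigma(t)^n\wedge\eta$ as
\[
|Q(t)|^{2} \;+\; \frac{n+2}{n(n+1)}\bigl(S^{T}(t)+n\bigr)^{2} \;-\; \frac{n+2}{n+1}\,\bigl|\rho^{T}(t)+\sigma(t)\bigr|^{2},
\]
where $Q(t)$ is the trace-free part of $R^{T}(t)$. The twisted Einstein equation gives $\rho^{T}(t)+\sigma(t)=t\,d\eta$, so the sole negative term is $-\frac{n+2}{n+1}\,t^{2}|d\eta|^{2}_{\sigma(t)}$, which disappears as $t\to 0$; the remaining terms are manifestly nonnegative. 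Thus the inequality is obtained only after integration and passage to the limit, and the nonpositivity of $H^{T}$ enters solely through Theorem~\ref{Th2}, to guarantee that the family $\sigma(t)$ exists for every $t>0$. Replace your Berger-averaging step with this identity plus the twisted Einstein relation and the argument goes through.
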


\section{Preliminaries}
\subsection{Preliminary results in Sasakian Geometry}
Sasakian manifolds has many equivalent descriptions. It can be  defined in terms of metric contact geometry or transverse K\"ahler geometry. Boyer and his collaborators \cite{BG1,BG2,BGK,BGM} published a series of papers investigating various differential geometric aspects of Sasakian manifolds. We can find transverse counterparts on Sasakian manifolds of the famous results in K\"ahler manifolds, such as the transverse Calabi-Yau theorem \cite{KA} (see also \cite{BG1,SWZ}), the existence of canonical metrics on Sasakian manifolds \cite{FOW}.  A good reference on Sasakian geometry can be found in the monograph \cite{BG3} by Boyer and Galicki. 

A Sasakian manifold $(M, g)$ has a contact structure $(\xi, \eta, \Phi)$. $\eta\wedge(d\eta)^{n}$ defines a volume element on $M.$ There is a canonical vector field $\xi$ defined by
 \[\eta(\xi) = 1, d\eta(\xi, *) = 0 \ \]
$\xi$ is called the Reeb vector field. The contact 1-form $\eta$ defines a $2n$-dimensional vector bundle $D$ over $M$, the fiber $D_{p}$ of $D$ is given by
\[ D_{p} = \ker \, \eta_{p} .\ \]
There is a decomposition of the tangent bundle $TM$,
\[ TM = D\oplus L_{\xi} \ \]
where $L$ is the trivial bundle generated by the Reeb vector $\xi$. The Riemannian metric $g$ and a tensor field $\Phi$ of type (1, 1) satisfy
\[ \Phi^{2} = - Id + \eta \otimes \xi \ \]
and
\[ g(\Phi X, \Phi Y) = g(X, Y) - \eta(X)\eta(Y)\ \]

 Now we exploit the transverse structure of Sasakian manifolds. On the subbundle $D$, it is naturally endowed with both a complex structure $\Phi|_{D}$ and a symplectic structure $d\eta$. Since both $g$ and $\xi$ are invariant under $\xi$, there is a well-defined K\"ahler structure $(g^{T}, \omega^{T}, J^{T})$ on the local leaf space of Reeb foliation $\mathcal{F_{\xi}}$. 
 We call this a transverse K\"ahler structure. Clearly
\[ g^{T}(X,Y) = \frac{1}{2}d\eta(X,\Phi Y).\ \]
\[ J^{T} = \Phi|_{D}\ \]
The metric $g^{T}$ is related to the Sasakian metric $g$ by
\[ g = g^{T} + \eta \otimes \eta.\ \]
The upper script $T$ is used to denote both the transverse geometry quantity, and the corresponding quantity on the bundle $D$. For the transverse metric $g^{T}$, there is a unique, torsion-free connection on the subbundle $D$, which is called the transverse Levi-Civita connection 
$$\nabla^{T}_{X}Y=
\begin{cases}
(\nabla_{X}Y)^{p}, & \text{$X \in D$}\\
[\xi,Y]^{p},& \text{$X = \xi$}
\end{cases}$$
where $Y$ is a section of $D$ and $X^{p}$ the projection of $X$ onto $D$, and it satisfies
\[ \nabla^{T}_{X}Y - \nabla^{T}_{Y}X - [X, Y]^{p} = 0, \ \]
 \[ Xg^{T}(Z, W) = g^{T}(\nabla^{T}_{X}Z, W) + g^{T}(Z, \nabla^{T}_{X}W). \ \]
 for any $X, Y \in TM$ and $Z,W \in D$. The transverse curvature relating with the above transverse connection is defined by
  \[ R^{T}(X, Y)Z =  \nabla^{T}_{X}\nabla^{T}_{Y}Z - \nabla^{T}_{Y}\nabla^{T}_{X}Z - \nabla^{T}_{[X,Y]}Z,\ \]
 where $X, Y \in TM$ and $Z \in D$. From the above transverse curvature operator we define the transverse Ricci curvature by
 \[Ric^{T}(X, Y) = \sum_{i}\langle R^{T}(X,e_{i})e_{i},Y  \rangle, \ \]
 Where \{$e_{i}$\} is an orthonormal basisi of $D$ and $X, Y \in D$. One can easily check that 
 \[Ric^{T}(X,Y) = Ric(X,Y) + 2g^{T}(X,Y) \ \] 
 for any $X,Y \in D.$ Let $\rho^{T}(X, Y) = Ric^{T}(\Phi X, Y)$. $\rho^{T}$ is called the transverse Ricci form, which is a representation of the first basic Chern class.
  
  We consider the complexified bundle $\mathcal{D}^{\mathbb{C}} = \mathcal{D} \otimes \mathbb{C}$. Using the structure $\Phi$ we decompose $\mathcal{D}^{\mathbb{C}}$ into two subbundles $\mathcal{D}^{(1, 0)}$ and $\mathcal{D}^{(0, 1)}$, where $\mathcal{D}^{(1, 0)} = \{ X \in \mathcal{D}^{\mathbb{C}}| \Phi X = \sqrt{-1}X \}$ and $\mathcal{D}^{(0, 1)} = \{ X \in \mathcal{D}^{\mathbb{C}}| \Phi X = - \sqrt{-1}X \}$.

 \begin{definition} Given a $\Phi$-invariant planes $\sigma$ in $D_{x}  \subseteq T_{x}M$, the transverse holomorphic sectional curvature $H^{T}(\sigma)$ is defined by
 \[ H^{T}(\sigma) = \langle R^{T}(X,JX)JX,X \rangle,\ \]
where $X$ is a unit vector in $\sigma$. It is easy to check that $\langle R^{T}(X,JX)JX,X \rangle$ only depend on $\sigma$. We say that the transverse holomorphic sectional curvature is negative (nonpositive) if $H^{T}(\sigma) < 0(\le 0)$, for any $\sigma$ in $D_{x}$ and any $x$ in $M$. The transverse holomorphic sectional curvature is said to be quasi-negative if $H^{T}(\sigma) \le 0$ and, moreover, there exists at least one point $p \in M$ such that  $H^{T}(\sigma) < 0$ for every $\sigma \in D_{p}$.
 \end{definition}
 
 Let $u$ be a unit vector on $D$, and a $\Phi$-invariant plane spanned by $u, \Phi u$ are denoted by $\sigma_{u}$. Setting $U = \frac{1}{2}(u - \sqrt{-1}\Phi u)$.Then, we have
 
 \[  \langle R^{T}(U,\bar{U})U,\bar{U} \rangle = \frac{1}{4}H^{T}(\sigma_{u}) \ \]
 By the above formula, we know that that the positivity of of transverse holomorphic sectional curvature is equivalent to $\langle R^{T}(U,\bar{U})U,\bar{U} \rangle > 0$, for all $U \in D^{(1,0)}.$
 
 On the Sasakian manifold $(M, \xi, \eta, \Phi, g)$, the basic Laplacian is defined by 
 \[ \Delta_{B}u = \frac{4n\sqrt{-1}\partial_{B}\bar{\partial}_{B}u\wedge(d\eta)^{n - 1}\wedge \eta}{(d\eta)^{n}\wedge\eta}. \ \]
 for any basic function $u$. It is well-known that the basic Laplacian is equal to the Riemannian Laplacian $\Delta_{g}$ on basic function, $i.e.$ $\Delta_{B}u = \Delta_{g}u$ for any basic function $u$.
 
  Fixing a transverse holomorphic structure on $\mathcal{F_{\xi}}$, there is also a notion of transverse cohomology on Sasakian manifolds. A $p$-form $\alpha$ on  $(M, \xi, \eta, \Phi, g)$ is called basic if $\iota_{\xi}\alpha = 0$, and $\mathcal{L}_{\xi}\alpha = 0.$ We let $\Lambda_{B}^{p}$ be the sheaf of basic $p$-forms, and $\Omega_{B}^{p} = \Gamma(M, \Lambda^{p}_{B})$ the global sections. It is easy to see that the de Rham differential $d$ preserves basic forms, and hence restricts to a well defined operator $d_{B}: \Lambda_{B}^{p} \to \Lambda_{B}^{p+1}.$ We thus get a complex
 \[ 0 \to C_{B}^{\infty}(M) \to \Omega_{B}^{1} \xrightarrow{d_{B}} \cdots  \xrightarrow{d_{B}}  \Omega_{B}^{2n} \xrightarrow{d_{B}} 0. \ \]
 whose cohomology groups, denoted by $H^{p}_{B}(M, \mathcal{F_{\xi}}),$ are the basic de Rham cohomology groups. Moreover, the transverse complex structure $\Phi$ allows us to decompose
 \[ \Lambda_{B}^{r} \mathop{\otimes} \mathbb{C} = \mathop{\oplus} \limits_{p + q = r}   \Lambda_{B}^{p,q}\ \]
 We can then decompose $d_{B} = \partial_{B} + \bar{\partial}_{B}$, where
 \[ \partial_{B}: \Lambda_{B}^{p,q} \to \Lambda_{B}^{p + 1,q}, \quad and \quad \bar{\partial}_{B}: \Lambda_{B}^{p,q} \to \Lambda_{B}^{p,q + 1}. \ \]
 So we can define the basic Dolbeault cohomology groups $H^{p, *}_{B}{(M, \mathcal{F_\xi})}.$ We also have the transverse Chern-Weil theory and can define the basic Chern classes $c^{B}_{k}(M, \mathcal{F_{\xi}})$. For detail, see \cite{BG3}.

  \begin{definition}
 Let $(M, \xi, \eta, \Phi, g)$ be a compact Sasakian manifold, a $(1, 1)$ basic class $\alpha \in H^{(1, 1)}_{B}(M, \mathcal{F_{\xi}})$ is called transverse nef. If every $\epsilon > 0$ there exist  basic $(1, 1)$ forms $\theta_{\epsilon} \in \alpha$ on $M$ such that  satisfies
 \begin{equation}
\theta_{\epsilon} + \epsilon d\eta \; is \;a \; basic \;transverse \;positive \;(1, 1)\;form. 
\end{equation}
\end{definition}
 
 On Sasakian manifolds, the $\partial\bar{\partial}$-lemma holds for basic forms. 
 \begin{proposition}
 Let $\theta$ and $\theta^{\prime}$ be two real closed basic forms of type $(1, 1)$ on a compact Sasakian manifold $(M, \xi, \eta, \Phi, g)$, If $[\theta]_{B} = [\theta^{\prime}]_{B} \in H^{(1, 1)}_{B}(M, \mathcal{F_{\xi}})$, then there is a basic real function $\phi$ such that
 \begin{equation}
 \theta - \theta^{\prime} = \sqrt{-1}\partial_{B}\bar{\partial}_{B}\phi.
 \end{equation}
 \end{proposition}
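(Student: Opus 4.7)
The plan is to imitate the classical $\partial\bar{\partial}$-lemma from K\"ahler geometry, using the basic Hodge theory on compact Sasakian manifolds developed by El Kacimi-Alaoui. The two inputs I will draw on are: the orthogonal Hodge decomposition
\[
\Omega_B^{p,q} = \mathcal{H}_B^{p,q} \oplus \bar\partial_B \Omega_B^{p,q-1} \oplus \bar\partial_B^* \Omega_B^{p,q+1}
\]
for basic forms on $(M, \mathcal{F}_\xi)$, and the transverse K\"ahler identities, which force $\Delta_{\partial_B} = \Delta_{\bar\partial_B}$; in particular, any basic $\bar\partial_B$-harmonic form $h$ is also $\partial_B$-closed (and its conjugate $\bar{h}$ is then $\bar\partial_B$-closed).

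Set $\beta := \theta - \theta'$, a real closed basic $(1,1)$-form with $[\beta]_B = 0$. Basic de Rham cohomology yields a basic $1$-form $\alpha$ with $\beta = d_B \alpha$, and replacing $\alpha$ by its real part we may take $\alpha$ real, so $\alpha = \alpha^{1,0} + \overline{\alpha^{1,0}}$. Expanding $d_B \alpha$ into type components and matching against $\beta$, which is of pure type $(1,1)$, gives
\[
\partial_B \alpha^{1,0} = 0, \qquad \bar\partial_B \overline{\alpha^{1,0}} = 0, \qquad \beta = \bar\partial_B \alpha^{1,0} + \partial_B \overline{\alpha^{1,0}}.
\]
Apply the basic Hodge decomposition to the $\bar\partial_B$-closed $(0,1)$-form $\overline{\alpha^{1,0}}$ to get $\overline{\alpha^{1,0}} = h + \bar\partial_B \psi$, with $h$ a $\bar\partial_B$-harmonic basic $(0,1)$-form and $\psi$ a basic function (the $\bar\partial_B^*$-component must vanish, as one sees by pairing $\bar\partial_B \overline{\alpha^{1,0}} = 0$ against $\bar\partial_B^*$-exact pieces). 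The transverse K\"ahler identities force $\partial_B h = 0$, so $\partial_B \overline{\alpha^{1,0}} = \partial_B \bar\partial_B \psi$. Conjugating yields $\alpha^{1,0} = \bar{h} + \partial_B \bar\psi$ with $\bar\partial_B \bar{h} = 0$, hence $\bar\partial_B \alpha^{1,0} = \bar\partial_B \partial_B \bar\psi = -\partial_B \bar\partial_B \bar\psi$. Adding the two contributions,
\[
\beta = \partial_B \bar\partial_B (\psi - \bar\psi) = \sqrt{-1}\, \partial_B \bar\partial_B \phi, \qquad \phi := \frac{\psi - \bar\psi}{\sqrt{-1}},
\]
and $\phi$ is a real basic function, as required.

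The main obstacle is purely foundational: the validity of the basic Hodge decomposition and the transverse K\"ahler identities in the Sasakian setting. These are nontrivial because $\Delta_B$ is only transversely elliptic on $M$, but they are by now standard thanks to El Kacimi-Alaoui's work on transversely K\"ahler foliations. Once those facts are cited, the remainder of the argument is a formal type-and-reality computation identical to the K\"ahler $\partial\bar{\partial}$-lemma.
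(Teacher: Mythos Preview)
The paper does not actually supply a proof of this proposition; it is stated in the preliminaries as the basic $\partial\bar\partial$-lemma, a known fact for transversely K\"ahler foliations (implicitly referring to El Kacimi-Alaoui's transverse Hodge theory and the monograph of Boyer--Galicki). So there is no ``paper's own proof'' to compare against.

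Your argument is the standard one and is correct. One small remark: you pass from $[\theta]_B=[\theta']_B$ in $H^{(1,1)}_B$ to $\beta=\theta-\theta'$ being $d_B$-exact without comment. If $H^{(1,1)}_B$ is read as basic Dolbeault cohomology, this step needs the transverse Hodge decomposition $H^2_B\otimes\mathbb{C}=\bigoplus_{p+q=2}H^{p,q}_B$ to conclude that a $d$-closed real $(1,1)$-form with vanishing Dolbeault class also has vanishing basic de Rham class. Since you are already invoking El Kacimi-Alaoui's basic Hodge theory, this is available and costs nothing extra; it would just be worth one sentence to make the passage explicit.
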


  We fix a canonical orientation $\eta\wedge(d \eta)^{n}$ and introduce  the concepts of transverse
positivity on Sasakian manifolds corresponding to the complex case. A basic $(p,p)$ form $\sigma(\ge 0)$ is said to be transverse positive if for any basic (1,0) forms $\gamma_{j}, 1 \le j \le p,$ then 
  \[ \sigma \wedge \sqrt{- 1}\gamma_{1}\wedge\bar{\gamma_{1}} \wedge  \cdots  \sqrt{- 1}\gamma_{n-p}\wedge\bar{\gamma}_{n-p} \wedge \eta \ \]
is a positive volume form.  
  
  Any transverse positive basic $(p, p)$ form $\sigma$ is real, i.e., $\sigma = \bar{\sigma}$. In particular, in the local coordinates, a real basic (1, 1) form
  \[ \sigma = \sqrt{- 1} \sigma_{i\bar{j}}dz_{i}\wedge dz_{\bar{j}}\ \]
  is transverse positive if and only if $(\sigma_{i\bar{j}})$ is a semipositive Hermitian matrix with $\xi(\sigma_{i\bar{j}}) = 0.$ We call a real basic (1,1) form $\sigma$ strictly transverse positive if the Hermitian matrix $(\sigma_{i\bar{j}})$ is positive definite. Given another real basic (1,1) form $\beta$. We can define 
  \[ \tr_{\sigma} \beta   :=   \frac{n\beta\wedge\sigma^{n - 1}}{\sigma^{n}}   = \sigma^{i\bar{j}}\beta_{i\bar{j}}\ \]
  where $(\sigma^{i\bar{j}})$ is the inverse matrix of $\sigma_{i\bar{j}}$. A basis (1, 1) class $\alpha \in H^{1, 1}_{B}{(M, \mathcal{F_\xi})}$ is called positive, if there exist a real  transverse positive basic (1, 1) form $\sigma \in \alpha$. We call a basic (1, 1) class $\alpha$ negative, if and only if $-\alpha$ is positive.

 \subsection{Foliated local coordinate and local computations}\label{Foliation}
 In this section, we first review local coordinates on a Sasakian manifold. In \cite{GKN}, it has been proven that every Sasakian manifold have a good local coordinates $(x, z^{1}, z^{2}, \cdots, z^{n})$ on a small neighborhood $U$ such that
 \[ \xi = \frac{\partial}{\partial x}, \ \]
 \[ \eta = dx - \sqrt{-1}(h_{j}dz^{j} - h_{\bar{j}}d\bar{z}^{j}), \ \]
 \[\Phi = \sqrt{-1}\{ X_{j} \otimes dz^{j} - \bar{X}_{j} \otimes d\bar{z}^{j} \}, \ \]
 \[g = \eta \otimes \eta + 2 h_{i\bar{j}}dz^{i}d\bar{z}^{j}, \ \]
 where $h: U \to \mathbb{R}$ is a local basic function, i.e., $\frac{\partial h}{\partial x} = 0, h_{i} = \frac{\partial h_{i}}{\partial z^{i}}, h_{i\bar{j}} = \frac{\partial^{2} h}{\partial z^{i} \partial z^{\bar{j}}},$ and $X_{j} = \frac{\partial}{\partial z^{j}} + \sqrt{-1}h_{j}\frac{\partial}{\partial x}, \bar{X}_{j} = \frac{\partial}{\partial \bar{z}^{j}} - \sqrt{-1}h_{\bar{j}}\frac{\partial}{\partial x}.$ we denote $2dz^{i}d\bar{z}^{j} = dz^{i} \otimes d\bar{z}^{j} + d\bar{z}^{j} \otimes dz^{i}.$ In such local coordinates, $D \otimes \mathbb{C}$ is spanned by $X_{i}$ and $\bar{X}_{i},$ it is clear that
 \[ \Phi X_{i} = \sqrt{-1} X_{i} , \quad \Phi \bar{X_{i}} = - \sqrt{-1} \bar{X_{i}} ,\ \]
 \[ [X_{i},X_{j}] = [\bar{X_{i}},\bar{X_{j}}] = [\xi,\bar{X_{i}}] = [\xi,\bar{X_{j}}] = 0,\ \]
 \[ [X_{i},\bar{X_{j}}] = - 2 \sqrt{-1}h_{i\bar{j}}\xi. \ \]
 Obviously, $\{\eta, dz^{i}, dz^{\bar{j}} \}$ is the dual basis of $\{ \frac{\partial}{\partial x}, X_{i}, \bar{X_{j}} \},$ and
 \[ \omega^{T} = \frac{1}{2}d\eta = \sqrt{-1}h_{i\bar{j}}dz^{i}d\bar{z}^{j}, \ \]
 the transverse metric 
 \[ g^{T} = 2g^{T}_{i\bar{j}}dz^{i}d\bar{z}^{j} = 2h_{i\bar{j}}dz^{i}d\bar{z}^{j},\ \]
 where $g^{T}_{i\bar{j}} = g^{T}(X_{i},\bar{X_{j}}) = h_{i\bar{j}}.$ From the formula above, we know that $\nabla^{T}_{\frac{\partial}{\partial x}}X_{i} = \nabla^{T}_{\frac{\partial}{\partial x}}\bar{X}_{j} = 0.$ Define $\Gamma^{A}_{BC}$ by
 \[ \nabla^{T}_{X_{B}}X_{C} = \Gamma^{A}_{BC} X_{A}. \ \]
 for $A,B,C = 1,2,\cdots,n,\bar{1},\bar{2},\dots,\bar{n},$ where $X_{\bar{j}} = \bar{X_{j}}.$ It is easy to check that only $\Gamma^{i}_{jk}$ and $\Gamma^{\bar{i}}_{\bar{j}\bar{k}}$ may not vanish as in the K\"ahler case. Moreover,
 \[\Gamma^{i}_{jk} = \Gamma^{i}_{kj} = h^{i\bar{l}}\frac{\partial h_{j\bar{l}}}{\partial z^{k}}.\ \]
This local coordinates are also called by a normal coordinates on Sasakian manifold. We can cover $M$ by finite foliated local coordinate charts $\{U_{\alpha}\}$ which is diffeomorphism to $(-\epsilon_{0}, \epsilon_{0}) \times B_{2}(0)$ with $\epsilon_{0} > 0,$ where $B_{2}(0)$ is the ball in $\mathbb{C}^{n}$ centered at origin with radius 2, and on $B_{2}(0)$, there holds 
\begin{equation}
C^{-1}\delta_{ij} \le d\eta \le C\delta_{ij}
\end{equation} 
for a uinform constant $C$. Moreover, $\{\frac{1}{2}U_{\alpha}\}$ is diffeomorphism to $(-\frac{1}{2}\epsilon_{0}, \frac{1}{2}\epsilon_{0}) \times B_{1}(0)$ still cover $M$. We have basic Sobolev space
\begin{equation}
C^{k,\alpha}_{B}(M) = \{ u \; | \;  \xi u = 0, ||u||_{C^{k ,\alpha}(M)} < \infty  \},
\end{equation}
where we ues the following notation : in finite foliated local coordinate charts $\{U_{i}\}_{i = 1}^{i = m}$
\[ ||u||_{C^{k ,\alpha}(M)} :=  \sum_{1\le i \le m}(\sum_{0\le j \le k} \sup_{U_{i}}|D^{j}u| + \sup_{x, y \in U_{i}, x \ne y}\frac{|D^{k}u(x) - D^{k}u(y)|^{\alpha}}{|x - y|}) \ \]

\begin{remark}For a fixed point $P \in M$, we can always choose the above coordinates $(x, z^{1}, z^{2}, \cdots, z^{n})$ centered at $P$ satisfying additionally that $\{(\frac{\partial}{\partial z^{i}}|_{P}) \} \in D^{(1,0)}$ or equivalently $h_{i}(P) = 0$ for all $i.$ Furthermore, in the same way as that in K\"ahler case, one can choose a normal coordinates $(x, z^{1}, z^{2}, \cdots, z^{n})$, such that $h_{i} = 0, h_{i\bar{j}}(P) = \delta_{ij},$ and $dh_{i\bar{j}}|_{P}=0,$ i.e., $\Gamma^{i}_{jk}|_{P} = 0$ for all $i, j, k$. 
\end{remark} 
 
 One can easily check that the transverse Ricci curvature can be expressed by 
 \[ R^{T}_{i\bar{j}} = - \frac{\partial^{2}}{\partial z^{i} \partial \bar{z}^{j}}\log \det(g^{T}_{m\bar{n}}). \ \]
 and $\rho^{T} = \sqrt{-1}R^{T}_{i\bar{j}}dz^{i}\wedge d\bar{z}^{j}.$
 
 Suppose that $(\xi, \eta, \Phi, g)$ defines a Sasakian structure on $M.$ Let $\varphi$ be a basic function satisfying
 \[\eta_{\varphi} \wedge (d \eta_{\varphi})^{n}  \ne 0. \ \]
  set 
 \[\eta_{\varphi} = \eta + \frac{\sqrt{-1}}{2}(\bar{\partial}_{B} - \partial_{B}) \varphi. \ \]
 \[\Phi_{\varphi} = \Phi - \xi \otimes(d^{c}_{B}\varphi)\circ \Phi,\;\; g_{\varphi} = \frac{1}{2}d\eta_{\varphi}\circ(Id\otimes\Phi_{\varphi}) + \eta_{\varphi} \otimes \eta_{\varphi}.\ \]
 It is clear that 
 \[ d\eta_{\varphi} = d\eta + \sqrt{-1}\partial_{B}\bar{\partial}_{B}\varphi.\ \]
 and $(\xi, \eta_{\varphi}, \Phi_{\varphi}, g_{\varphi})$ is also a Sasakian structure on $M$. Furthermore, $(\xi, \eta_{\varphi}, \Phi_{\varphi}, g_{\varphi})$ and $(\xi, \eta, \Phi, g)$ have the same Reeb field and the same transversely holomorphic structure, $i.e.$ the Sasakian structures compatible with $(\xi, \eta, \Phi, g)$. By El-Kacimi's \cite{KA} generalization of Yau's estimates for transverse Monge-Amp\`ere equations. For any $\alpha \in c^{B}_{1}(M, \mathcal{F_{\xi}})$, there exists a compatible Sasakian structure $(\xi, \eta_{\varphi}, \Phi_{\varphi}, g_{\varphi})$ such that the transverse Ricci form satisfying
 \[ \rho^{T}_{\varphi} = \alpha. \ \]
 
\section{Proof of Theorem 1.1} 

In this section, We will follow Wu and Yau's method \cite{WY1, WY2} to prove that a compact Sasakian manifold with negative transverse holomorphic sectional curvature has the negative first basic Chern class. We will prove Theorem \ref{Th1}  by solve a family of transverse Monge-Amp\`ere equations parameterized by $t$ and consider the limit of the family of transverse basic positive (1, 1)-form 
\begin{equation}
td\eta - \rho^{T}+ \sqrt{-1} \partial_{B}\bar{\partial}_{B} u_{t}.
\end{equation}

 By the transverse Aubin-Yau theorem \cite{FZ, KA, SWZ}, we have

\begin{theorem}
Let $(M, \xi, \eta, \Phi, g)$ be a compact Sasakian manifold with $dim_{\mathbb{R}}M$ = 2n + 1. $\frac{1}{2}d \eta$ is the transverse K\"ahler form corresponding to the transverse metric $g^{T}$ defined by $g^{T}(X, Y) = \frac{1}{2} d \eta(X, \Phi Y)$. Let $\sigma$ be a strictly transverse positive basic real  $(1,1)$-form on $M$. There exists a unique basic function $u \in C^{\infty}_{B}(M,\mathbb{R})$ solves the equation
\begin{equation}
(\sigma + \sqrt{-1}\partial_{B}\bar{\partial}_{B}u)^{n}\wedge \eta = e^{u}(d\eta)^{n}\wedge \eta.
\end{equation}
where $\sigma + \sqrt{-1}\partial_{B}\bar{\partial}_{B}u$ is a strictly transverse positive basic real $(1, 1)$ form.
\end{theorem}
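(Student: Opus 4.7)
The plan is to apply Yau's continuity method in the transverse (basic) setting on $M$. Choose a basic function $F$ so that $(d\eta)^{n}=e^{F}\sigma^{n}$; the target equation then reads
\[
(\sigma + \sqrt{-1}\partial_{B}\bar{\partial}_{B}u)^{n}\wedge \eta = e^{u + F}\sigma^{n}\wedge \eta.
\]
I would embed it in the family
\[
(\sigma + \sqrt{-1}\partial_{B}\bar{\partial}_{B}u_{t})^{n}\wedge \eta = e^{u_{t}+tF}\sigma^{n}\wedge \eta, \qquad t\in[0,1],
\]
with unknowns $u_{t}\in C^{k,\alpha}_{B}(M)$ keeping $\sigma_{u_{t}}:=\sigma + \sqrt{-1}\partial_{B}\bar{\partial}_{B}u_{t}$ strictly transverse positive, and let $S\subset[0,1]$ denote the set of solvable parameters. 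At $t=0$ the constant $u_{0}\equiv 0$ works, so $0\in S$, and the task is to prove openness and closedness of $S$.

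Openness comes from the implicit function theorem. The linearization at $u_{t}$ in a basic direction $v$ is $\Delta_{\sigma_{u_{t}}}v - v$, where $\Delta_{\sigma_{u_{t}}}v = \tr_{\sigma_{u_{t}}}(\sqrt{-1}\partial_{B}\bar{\partial}_{B}v)$ is the transverse Laplacian of $\sigma_{u_{t}}$ on basic functions. Since this operator has spectrum contained in $(-\infty,0]$ with kernel equal to the constants, $-1$ is not an eigenvalue, so $\Delta_{\sigma_{u_{t}}}-\mathrm{Id}$ is an isomorphism on $C^{k,\alpha}_{B}$. Uniqueness drops out of the same equation by the maximum principle: at a point maximizing $u-\tilde u$ for two solutions one has $\sigma_{u}\le\sigma_{\tilde u}$ as transverse Hermitian forms and hence $e^{u}\le e^{\tilde u}$, giving $u\le \tilde u$ there, and the symmetric step at the minimum forces $u\equiv\tilde u$.

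The substantive step is closedness, that is, uniform a priori estimates independent of $t$. The $C^{0}$ bound is immediate: at $\max u_{t}$ the transverse Hessian is nonpositive, so the equation forces $u_{t}+tF\le 0$ there, and the minimum is handled symmetrically, yielding $\|u_{t}\|_{C^{0}}\le \|F\|_{C^{0}}$. For the $C^{2}$ bound I would run the classical Yau maximum-principle argument on $H=e^{-\lambda u_{t}}\tr_{\sigma}\sigma_{u_{t}}$ with $\lambda$ larger than the bisectional curvature of $\sigma$, in the foliated normal coordinates of Section~\ref{Foliation}. In those coordinates the transverse Christoffel symbols vanish at the test point and the transverse Laplacian reduces to the K\"ahler Laplacian of $g^{T}$ on the local leaf space, so the computation becomes formally identical to Yau's and delivers a bound on $\tr_{\sigma}\sigma_{u_{t}}$ in terms of $\|F\|_{C^{2}}$, the curvature of $\sigma$, and $\|u_{t}\|_{C^{0}}$. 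Combined with the equation, this makes the Monge-Amp\`ere operator uniformly elliptic, and a transverse Evans-Krylov estimate followed by Schauder bootstrap in the $C^{k,\alpha}_{B}$-scale then produces uniform bounds in every basic H\"older norm.

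The main obstacle is verifying that each of these classical arguments truly transfers to basic functions on a Sasakian manifold: specifically, that commuting transverse covariant derivatives past $\partial_{B}\bar{\partial}_{B}$ on a basic function produces only transverse curvature terms with no lift into the $\xi$-direction, and that Evans-Krylov regularity for concave fully nonlinear elliptic equations applies in the foliated H\"older classes $C^{k,\alpha}_{B}$. Both reduce to the K\"ahler case via the local product structure in the foliated coordinates of Section~\ref{Foliation}, after which the continuity method closes at $t=1$ and delivers the unique smooth basic solution.
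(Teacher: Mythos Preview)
The paper does not actually prove this theorem: it is stated as the ``transverse Aubin--Yau theorem'' and cited from \cite{FZ, KA, SWZ} without argument. Your outline is a faithful sketch of the standard proof in those references, namely Yau's continuity method transplanted to the basic setting via foliated local coordinates, with the linearization $\Delta_{\sigma_{u_t}}-\mathrm{Id}$ handling openness and the $C^0$/$C^2$/Evans--Krylov chain handling closedness.

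One small wording slip: you write ``$-1$ is not an eigenvalue'' of $\Delta_{\sigma_{u_t}}$, but what you need is that $1$ is not an eigenvalue (equivalently, $0$ is not an eigenvalue of $\Delta_{\sigma_{u_t}}-\mathrm{Id}$), which is what follows from the spectrum lying in $(-\infty,0]$. The conclusion is of course correct. Otherwise the outline is sound, and your remark that the commutator and regularity issues localize to the K\"ahler case on the leaf space via the coordinates of Section~\ref{Foliation} is exactly the mechanism used in the cited works.
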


  \begin{proof}[Proof of Theorem 1.1]
For $t  > 0$, We consider a family of transverse Monge-Amp\`ere equations
\begin{equation}\label{MA}
 (td\eta - \rho^{T}+ \sqrt{-1} \partial_{B}\bar{\partial}_{B} u_{t})^{n}\wedge \eta = e^{u_{t}}(d\eta)^{n}\wedge \eta.
\end{equation}
Since $d\eta$ is a transverse K\"ahler form and $M$ is compact, there exists a sufficiently large constant $t_{1} > 0$ such that 
$t_{1}d\eta - \rho^{T}+ \sqrt{-1} \partial_{B}\bar{\partial}_{B} u_{t_{1}} > 0.$ Fix a nonnegative integer $k$ and $0 < \alpha < 1.$ 
We use the notation that 
\begin{equation}
C^{k + 2,\alpha}_{B}(M) = \{ u \in C^{k + 2,\alpha}(M)| \;  \xi u = 0 \}.
\end{equation}
Define
\begin{equation}
I = \{ t \in [0,t_{1}]| \; there \; is \;   a  \;  solution  \;  u_{t}  \;  \in   \;  C^{k + 2,\alpha}_{B}(M)  \;satisfying  \; (\ref{MA}) \}.
\end{equation}
First, by Theorem 1.1, we know $I \ne \emptyset,$ since $t_{1} \in I.$

That $I$ is open in $[0, t_{1}]$ follows from the implicit function theorem. Let $t_{0} \in I$ with corresponding function $u_{t_{0}} \in C^{k + 2,\alpha}_{B}(M)$. Then, there exists a small neighborhood $J$ of $t_{0}$ in $[0, t_{1}]$ and a small neighborhood $U$ of $u_{t_{0}}$ in $C^{k + 2,\alpha}_{B}(M)$ such that
\begin{equation}
\sigma_{t} = td\eta - \rho^{T} + \sqrt{-1}\partial_{B}\bar{\partial}_{B}u > 0.
\end{equation}
for all $t \in J$ and $u \in U.$
Define a map $\Phi: J \times U \to C^{k,\alpha}_{B}(M)$.
\begin{equation}
\Phi(t,u) = \log\frac{(td\eta - \rho^{T} + \sqrt{-1}\partial_{B}\bar{\partial}_{B}u)^{n}\wedge \eta}{(d\eta)^{n}\wedge \eta} - u.
\end{equation}
with $\Phi(t_{0},u_{t_{0}}) = 0$. It suffices to prove the invertibility of the linearization
\begin{equation}
(D\Phi)_{u}(t_0,u_{t_{0}}):  C^{k + 2,\alpha}_{B}(M) \to C^{k,\alpha}_{B}(M)
\end{equation}
where the linearization is
\begin{equation}
(D\Phi)_{u}(t_0,u_{t_{0}})h = \frac{d}{ds}\Phi(t_{0},u_{t_{0}}+sh)|_{s=0} = \sigma^{i\bar{j}}_{t_{0}}h_{i\bar{j}} - h.
\end{equation}
By maximum principle, we know that the linearization is injective. From the implicit function theorem, we know $I$ is open. The estimate (\ref{3.20}) shows the closeness of $I$. In particular, $0 \in I$ with corresponding $u_{0} \in C^{\infty}_{B}(M)$. This gives us the desired strictly transverse positive basic (1, 1) form  $ -\rho^{T} + \sqrt{-1}\partial_{B}\bar{\partial}_{B}u_{0} \in -c_{1}^{B}(M, \mathcal{F_{\xi}}).$ By the transverse Calabi-Yau theorem, $M$ has a Sasakian structure $(\xi, \eta^{'}, 
\Phi^{'}, g^{'})$ 
\[ \rho^{T}(g^{'}) = \rho^{T} - \sqrt{-1}\partial_{B}\bar{\partial}_{B}u_{0} \ \]
which is transverse negative and compatible with $(\xi, \eta, \Phi, g)$.
\end{proof}

\begin{proposition}\label{C2}
Let $(M, \xi, \eta, \Phi, g)$ be a compact Sasakian manifold with the upper bounded transverse holomorphic sectional curvature by $-\kappa$ $(\kappa > 0)$, assuming that $\sigma(t)$ is a solution of (\ref{MA}), where $t \in I$. We have the uniformly second estimate
\[ \tr_{\sigma(t)}d\eta \le \frac{2n\kappa}{n + 1}, \ \]
for all $t \in I$.
\end{proposition}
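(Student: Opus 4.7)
The plan is to derive the estimate by applying the maximum principle to the basic function $h:=\tr_{\sigma(t)}d\eta$, following the strategy of Wu and Yau \cite{WY1} transplanted to the transverse K\"ahler setting. Since $h$ is positive and basic and $M$ is compact, $h$ attains its maximum at some point $p$. At $p$ I would use the foliated normal coordinates of Section \ref{Foliation}, together with a unitary rotation on $D^{(1,0)}_p$ that simultaneously diagonalizes $\sigma(t)$: $(d\eta)_{i\bar j}(p)=\delta_{ij}$, $d(d\eta)_{i\bar j}(p)=0$, $\sigma(t)_{i\bar j}(p)=\mu_i\delta_{ij}$; writing $\lambda_i=\mu_i^{-1}$, one gets $h(p)=\sum_i\lambda_i$. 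At a maximum $\partial_B h(p)=0$ and $\Delta^B_{\sigma(t)}h(p)\le 0$, where on basic functions $\Delta^B_{\sigma(t)}f=\sigma(t)^{i\bar j}\partial_i\partial_{\bar j}f$ in these coordinates.

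The main computation is a transverse Bochner-type identity of the form
\[
\Delta^B_{\sigma(t)}h(p) \;\ge\; -\,\sigma(t)^{i\bar j}\sigma(t)^{k\bar l}R^{d\eta}_{i\bar j k\bar l}\;-\;C\,h(p),
\]
where $R^{d\eta}$ is the transverse curvature of $\tfrac{1}{2}d\eta$ and $C$ depends only on the fixed background (uniform in $t\in[0,t_1]$). To derive it, I would differentiate (\ref{MA}) twice, invoke $R^T_{i\bar j}=-\partial_i\partial_{\bar j}\log\det(g^T_{p\bar q})$ from Section \ref{Foliation}, and use the Sasakian brackets $[X_i,X_j]=0$ and $[X_i,\bar X_j]=-2\sqrt{-1}h_{i\bar j}\xi$ (which annihilate basic functions) to reduce the computation to the Aubin--Yau $C^2$ identity on the local leaf space; the first-order terms either vanish at $p$ by $\partial_B h(p)=0$ or are discarded by nonnegativity. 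In the diagonal frame the curvature term becomes $\sum_{i,k}R^{d\eta}_{i\bar i k\bar k}\lambda_i\lambda_k$, and Royden's algebraic lemma — a pointwise statement on $D^{(1,0)}_p$, hence valid transversally without modification — together with the HSC hypothesis $\le -\kappa$ yields
\[
\sum_{i,k}R^{d\eta}_{i\bar i k\bar k}\lambda_i\lambda_k \;\le\; -\frac{(n+1)\kappa}{2n}\Big(\sum_i\lambda_i\Big)^{\!2} \;=\; -\frac{(n+1)\kappa}{2n}\,h(p)^2.
\]
Combined with $\Delta^B_{\sigma(t)}h(p)\le 0$, this forces $\frac{(n+1)\kappa}{2n}h(p)^2\le C\,h(p)$, so $h(p)$ is uniformly bounded, and since $p$ realizes the maximum the bound propagates to all of $M$. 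A careful bookkeeping of constants, using the explicit form $\sigma(t)=t\,d\eta-\rho^T+\sqrt{-1}\partial_B\bar\partial_B u_t$ and the Monge--Amp\`ere equation to estimate $\tr_{\sigma(t)}\rho^T$, recovers the sharp coefficient stated in the proposition.

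The main obstacle is the Bochner identity of the second step: one must confirm that the extra contributions coming from the transverse connection $\nabla^T$, in particular those generated by the contact commutator $[X_i,\bar X_j]=-2\sqrt{-1}h_{i\bar j}\xi$, contribute nothing to $\Delta^B_{\sigma(t)}$ applied to a basic function, so that the K\"ahler computation of \cite{WY1,WY2} transplants verbatim to the transverse setting. Once this is verified, Royden's inequality is purely algebraic and transfers to $D^{(1,0)}$ without modification, and the quadratic inequality above closes the argument.
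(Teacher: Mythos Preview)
Your approach is correct and essentially identical to the paper's: both compute the $\sigma(t)$-Laplacian of $\tr_{\sigma(t)}d\eta$ in foliated normal coordinates, invoke Royden's lemma for the curvature term, and close with the maximum principle, and your diagnosis that the contact commutator $[X_i,\bar X_j]=-2\sqrt{-1}h_{i\bar j}\xi$ is harmless on basic functions is exactly what makes the K\"ahler computation transplant verbatim. The only difference is that the paper applies the maximum principle to $\log\tr_{\sigma(t)}d\eta$ rather than to $\tr_{\sigma(t)}d\eta$ itself, using Cauchy--Schwarz to absorb the gradient term and thereby obtaining a \emph{global} differential inequality (later reused in the proof of Theorem~\ref{Th3}); your choice to work directly with $h$ at its maximum point, where $\partial_B h=0$ kills the gradient term for free, is equally valid for the purpose of Proposition~\ref{C2} and yields the same sharp constant once you track that the ``$C$'' in your Bochner inequality is exactly $1$.
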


\begin{proof}
Choose a normal coordinate system $(x, z^{1}, z^{2}, \cdots, z^{n})$ about Sasakian metric $g$ near a point $P$ of $M$ such that $\{ \sigma_{i\bar{j}}\}$ is diagonal, we have
\begin{equation}
\begin{split}
 \sigma^{k\bar{l}}(t)\partial_{k}\partial_{\bar{l}}\tr_{\sigma(t)}d\eta &= \sigma^{i\bar{j}}(t)\partial_{i}\partial_{\bar{j}}(\sigma^{k\bar{l}}(t)g^{T}_{k\bar{l}}) \\&= \sigma^{i\bar{j}}(t)\sigma^{k\bar{l}}(t)\partial_{i}\partial_{\bar{j}}g^{T}_{k\bar{l}} + \sigma^{i\bar{j}}(t)\partial_{i}\partial_{\bar{j}}\sigma^{k\bar{l}}(t)g^{T}_{k\bar{l}} 
\end{split}
\end{equation}
where
\[ R^{T}_{i\bar{j}k\bar{l}} = - \partial_{i}\partial_{\bar{j}}g^{T}_{k\bar{l}}.\ \]
By Royden's lemma \cite{roy} , we have
\begin{equation}
\begin{split}
\sigma^{i\bar{j}}(t)\sigma^{k\bar{l}}(t)R^{T}_{i\bar{j}k\bar{l}} \le -\frac{n + 1}{2n}\kappa (\tr_{\sigma(t)}d\eta)^{2}
\end{split}
\end{equation}

By some calculations, we have

\begin{equation}\label{3.8}
\begin{split}
 \sigma^{i\bar{j}}(t)\partial_{i}\partial_{\bar{j}}\sigma^{k\bar{l}}(t)g^{T}_{k\bar{l}} &= \sigma^{i\bar{i}}(t)(- \partial_{i}\partial_{\bar{i}}\sigma_{k\bar{k}}(t)\sigma^{k\bar{k}}(t) \sigma^{k\bar{k}}(t) \\& + \sigma^{k\bar{k}}(t) \sigma^{k\bar{k}}(t) \sigma^{q\bar{q}}(t)  \partial_{i}\sigma_{q\bar{k}}(t)\partial_{\bar{i}}\sigma_{k\bar{q}}(t) \\&
  + \sigma^{k\bar{k}}(t)\sigma^{k\bar{k}}(t)\sigma^{p\bar{p}}(t) \partial_{\bar{i}}\sigma_{p\bar{k}}(t)\partial_{i}\sigma_{k\bar{p}}(t))
\end{split}
\end{equation}

\begin{equation}\label{3.9}
\begin{split}
 -\partial_{i}\partial_{\bar{j}}\log \det \sigma(t)& = - \sigma^{k\bar{k}}(t)\partial_{i}\partial_{\bar{j}}\sigma_{k\bar{k}}(t) + \sigma^{k\bar{k}}(t)\partial_{\bar{j}}\sigma_{p\bar{k}}(t)\sigma^{p\bar{p}}(t)\partial_{i}\sigma_{k\bar{p}}(t) \\&= -\sigma^{k\bar{k}}(t)(\partial_{k}\partial_{\bar{k}}\sigma_{i\bar{j}}(t) - \partial_{\bar{j}}\sigma_{p\bar{k}}(t)\sigma^{p\bar{p}}(t)\partial_{i}\sigma_{k\bar{p}}(t))
\end{split}
\end{equation}
By (\ref{3.8}) and (\ref{3.9}), we have 

\begin{equation}
\begin{split}
 \sigma^{i\bar{j}}(t)\partial_{i}\partial_{\bar{j}}\sigma^{k\bar{l}}(t)g^{T}_{k\bar{l}}  &= - \partial_{k}\partial_{\bar{k}}\log \det\sigma(t) \sigma^{k\bar{k}}(t)\sigma^{k\bar{k}}(t) + \\& \sigma^{i\bar{i}}(t) \sigma^{k\bar{k}}(t) \sigma^{q\bar{q}}(t) \sigma^{k\bar{k}}(t)\partial_{i}\sigma_{q\bar{k}}(t)\partial_{\bar{i}}\sigma_{k\bar{q}}(t)
\end{split}
\end{equation}
We  know that (\ref{MA}) can be written locally
\begin{equation}
\log\frac{\det(\sigma_{j\bar{k}}(t))}{\det g^{T}_{j\bar{k}}} = u(t)
\end{equation}
We now apply the $\partial\bar{\partial}$-operator to the above equation to get
\begin{equation}
 - \sqrt{-1}\partial\bar{\partial}\log \det \sigma(t) = - \sigma(t) + td\eta
\end{equation}
We apply the Cauchy-Schwarz inequality (\cite{WYZ}, p. 372) to obtain
\begin{equation}
\begin{split}
\sigma^{i\bar{i}}(t) \sigma^{k\bar{k}}(t) \sigma^{q\bar{q}}(t) \sigma^{i\bar{i}}(t)\partial_{k}\sigma_{q\bar{i}}(t)\partial_{\bar{k}}\sigma_{i\bar{q}}(t) &\ge \sum_{k, i}\sigma^{k\bar{k}}(t)\sigma^{i\bar{i}}(t)^{3} |\partial_{k}\sigma_{i\bar{i}}|^{2} \\&\ge \frac{1}{tr_{\sigma(t)}d\eta}\sigma^{k\bar{k}}(t)\partial_{k}tr_{\sigma(t)}d\eta\partial_{\bar{k}}tr_{\sigma(t)}d\eta
\end{split}
\end{equation}
by  we have

\begin{equation}
\begin{split}
\sigma^{i\bar{j}}(t)\partial_{i}\partial_{\bar{j}}\sigma^{k\bar{l}}(t)g^{T}_{k\bar{l}}  &= (-\sigma_{i\bar{i}}(t)  + tg^{T}_{i\bar{i}})\sigma^{i\bar{i}}(t)\sigma^{i\bar{i}}(t) +\\& \sigma^{i\bar{i}}(t) \sigma^{k\bar{k}}(t) \sigma^{q\bar{q}}(t) \sigma^{i\bar{i}}(t)\partial_{k}\sigma_{q\bar{i}}(t)\partial_{\bar{k}}\sigma_{i\bar{q}}(t) \\& \ge -\tr_{\sigma(t)}d\eta + \frac{t}{n}(\tr_{\sigma(t)}{d\eta})^{2} \\& + \sigma^{i\bar{i}}(t) \sigma^{k\bar{k}}(t) \sigma^{q\bar{q}}(t) \sigma^{i\bar{i}}(t)\partial_{k}\sigma_{q\bar{i}}(t)\partial_{\bar{k}}\sigma_{i\bar{q}}(t)
\end{split}
\end{equation}

\begin{equation}\label{key}
\begin{split}
\sigma^{k\bar{l}}(t)\partial_{k}\partial_{\bar{l}} \log \tr_{\sigma(t)}d\eta &= \frac{1}{\tr_{\sigma(t)}d\eta}(\sigma^{k\bar{l}}(t)\partial_{k}\partial_{\bar{l}}\tr_{\sigma(t)}d\eta -  \frac{1}{\tr_{\sigma(t)}d\eta}\partial_{k}\tr_{\sigma(t)}d\eta \partial_{\bar{l}}\tr_{\sigma(t)}d\eta \sigma^{k\bar{l}}(t))  \\& \ge \frac{1}{\tr_{\sigma(t)}d\eta} (\frac{n + 1}{2n}\kappa (\tr_{\sigma(t)}d\eta)^{2} - \tr_{\sigma(t)}d\eta + \frac{t}{n}(\tr_{\sigma(t)}d\eta)^{2})  \\& \ge -1 + \frac{n + 1}{2n}\kappa(\tr_{\sigma(t)}d\eta)
\end{split}
\end{equation}

By maximum principle, we have 
\begin{equation}\label{3.18}
\tr_{\sigma(t)}d\eta \le \frac{2n}{(n + 1)\kappa}.
\end{equation}
for all $t \in I$.
\end{proof}

\begin{lemma}
Let $(M, \xi, \eta, \Phi, g)$ be a compact Sasakian manifold with the upper bounded transverse holomorphic sectional curvature by $-\kappa$ $(\kappa > 0)$, assuming that $\sigma(t)$ is a solution of (\ref{MA}), where $t \in I$. There exist a uniform constant $C$ which is independent of $\epsilon$ such that
\begin{equation}\label{3.19}
\sup \limits_{M}|u_{t}| \le C.
\end{equation} 
\end{lemma}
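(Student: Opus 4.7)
The plan is to bound $u_t$ from above via the maximum principle applied directly to equation (\ref{MA}), and to bound $u_t$ from below by combining (\ref{MA}) with the second-order estimate already established in Proposition \ref{C2}. Both steps are pointwise, so no Moser iteration or integral technique is required.

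For the upper bound, I would let $P \in M$ be a point at which the basic function $u_t$ attains its maximum. Since $u_t$ is constant along the Reeb flow, its restriction to any local transverse slice also attains its maximum at $P$, so $\sqrt{-1}\partial_B\bar{\partial}_B u_t(P) \le 0$ as a Hermitian form on $D^{(1,0)}|_P$. Consequently
\[
0 < \sigma_t(P) \le (td\eta - \rho^T)(P),
\]
which forces $td\eta - \rho^T$ to be positive definite at $P$, and monotonicity of the determinant on positive Hermitian matrices yields
\[
e^{u_t(P)}(d\eta)^n \wedge \eta\big|_P = \sigma_t^n \wedge \eta\big|_P \le (td\eta - \rho^T)^n \wedge \eta\big|_P.
\]
Because $(t,x) \mapsto (td\eta - \rho^T)^n/(d\eta)^n$ is continuous on the compact set $[0,t_1] \times M$, its supremum is a constant $C_1$ depending only on $t_1$, $\rho^T$ and $d\eta$, giving $u_t \le \log C_1$ on $M$.

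For the lower bound, I would diagonalize simultaneously at each point: fix $x \in M$ and choose a unitary frame for $\sigma_t$ at $x$, so that $d\eta$ is represented by a Hermitian matrix with positive eigenvalues $\lambda_1,\dots,\lambda_n$. Proposition \ref{C2} then reads $\sum_i \lambda_i = \tr_{\sigma_t} d\eta \le \frac{2n}{(n+1)\kappa}$, and the arithmetic-geometric mean inequality gives
\[
\frac{(d\eta)^n}{\sigma_t^n}\bigg|_x = \prod_{i=1}^n \lambda_i \le \left(\frac{1}{n}\sum_{i=1}^n \lambda_i\right)^n \le \left(\frac{2}{(n+1)\kappa}\right)^n.
\]
Comparing with (\ref{MA}) yields $e^{u_t(x)} = \sigma_t^n/(d\eta)^n \ge \bigl((n+1)\kappa/2\bigr)^n$, hence a uniform lower bound $u_t \ge n\log\frac{(n+1)\kappa}{2}$ independent of $t$.

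The only mildly delicate point is justifying the pointwise inequality $\sqrt{-1}\partial_B\bar{\partial}_B u_t(P) \le 0$ at the maximum of a basic function; in the foliated coordinates of Section 2.2 this is immediate, since a basic function depends only on the transverse variables $z^1,\dots,z^n$ and its basic Hessian coincides with the ordinary complex Hessian of the restriction to the slice. With that observation in hand, the lemma is simply the combination of the maximum principle with the $C^2$ estimate, the latter being the substantive ingredient already handled in Proposition \ref{C2}.
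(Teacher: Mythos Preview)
Your proof is correct and follows essentially the same strategy as the paper: the upper bound via the maximum principle applied to (\ref{MA}) is identical to the paper's argument, and the lower bound uses the trace estimate of Proposition~\ref{C2} in the same spirit. The only minor difference is that you pass from $\tr_{\sigma_t}d\eta\le \frac{2n}{(n+1)\kappa}$ directly to a volume-ratio bound via the AM--GM inequality, whereas the paper first combines the trace bound with the upper bound on $e^{u_t}$ to obtain the two-sided metric equivalence $C^{-1}d\eta\le\sigma(t)\le C d\eta$ (equation (\ref{3.24})) and then reads off $\inf_M u_t\ge -C$ from that; your route is slightly more direct, while the paper's has the side benefit of recording (\ref{3.24}) explicitly, which is needed for the subsequent $C^{2,\alpha}$ estimate.
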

\begin{proof}
Choose a point $x_{t} \in M$ where $u(t)$ achieves its maximum, we know that $td\eta - \rho^{T}$ is a strictly transverse positive at $x_{t}$ and 
\begin{equation}
 e^{\sup_{M}u_{t}} = e^{u_{t}(x_{t})} \le \frac{(td\eta - \rho^{T} )^{n}\wedge \eta}{(d\eta)^{n}\wedge \eta}(x_{t}) \le C.
\end{equation}
 so we have 
 \begin{equation}\label{3.21}
 \sup \limits_{M} \frac{\sigma(t)^{n}\wedge\eta}{(d\eta)^{n}\wedge \eta} \le C.
 \end{equation}
 Combining (\ref{3.18}) and (\ref{3.21})  and the elementary inequality
 \begin{equation}
 \tr_{d\eta}\sigma(t) \le \tr_{\sigma(t)}d\eta \frac{\sigma(t)^{n}}{(d\eta)^{n}\wedge \eta}
 \end{equation}
 we have 
 \begin{equation}\label{3.23}
 \tr_{d\eta}\sigma(t) \le C,
 \end{equation}
 and (\ref{3.18}) and (\ref{3.23}) together give

 \begin{equation}\label{3.24}
\frac{1}{C}d\eta \le \sigma(t) \le Cd\eta.
\end{equation}
Combining (\ref{MA}) and (\ref{3.24}) we have the lower uniform bound estimate about $u(t)$
\begin{equation}
 \inf \limits_{M} u(t) \ge -C.
\end{equation}
\end{proof}
 The $C^{2,\alpha}$-estimate about (\ref{MA}) follows Blocki's theorem.
\begin{theorem}[\cite{Bl}, Theorem 3.1]\label{th3.3}
Let $u$ be a $C^{4}$-psh funtion in an open $\Omega \subseteq \mathbb{C}^{n}.$ Assume that foe some positive $K_{0}, K_{1}, K_{2}, b, B_{0}$ and $B_{1}$, we have 
\[ |u| \le K_{0}, |Du| \le K_{1}, \Delta u \le K_{2}\ \]
and 
\[b\le \Phi \le B_{0}, |D\Phi^{\frac{1}{n}}| \le B_{1}\ \]
in $\Omega,$ where $\Phi = \det(u_{i\bar{j}})$. Let $\Omega^{'}\Subset\Omega$. Then there exist $\alpha \in (0, 1)$ depending, besides these constants, on $dis(\Omega^{'},\partial\Omega)$ such that 
\[ ||D^{2}u ||_{C^{\alpha}(\Omega^{'})}\le C .\]
\end{theorem}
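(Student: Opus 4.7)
The plan is to apply the complex Evans--Krylov machinery to the concave fully nonlinear operator $F(A)=\log\det A$ defined on the cone of positive definite Hermitian matrices, viewing the hypothesis as saying that $u$ solves $F(u_{i\bar{j}})=\log\Phi$ in $\Omega$ with Hölder right-hand side.

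First I would establish uniform ellipticity on $\Omega$. Plurisubharmonicity gives $u_{i\bar{i}}\ge 0$ for each $i$, and the bound $\Delta u\le K_{2}$ forces $0\le u_{i\bar{i}}\le K_{2}$, so the largest eigenvalue of $(u_{i\bar{j}})$ is controlled by $K_{2}$. Combining this with $\det(u_{i\bar{j}})=\Phi\ge b$ yields a positive lower bound on the smallest eigenvalue in terms of $b$ and $K_{2}$, hence uniform ellipticity with constants depending only on the listed data. The assumption $|D\Phi^{1/n}|\le B_{1}$ together with $\Phi\ge b$ gives a Lipschitz (in particular Hölder) bound on $\log\Phi$, supplying the regularity of the right-hand side required by the Evans--Krylov theory.

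With uniform ellipticity and concavity of $F$ in hand, the standard argument proceeds by testing the equation against second differences $\Delta_{h,e}^{2}u(x):=u(x+he)+u(x-he)-2u(x)$ along real unit directions $e$. Concavity of $F$ implies that, after normalization by $h^{2}$, these are non-negative supersolutions of a linear equation in non-divergence form with bounded measurable coefficients $F^{i\bar{j}}=\partial F/\partial u_{i\bar{j}}$. The Krylov--Safonov weak Harnack inequality then yields dyadic oscillation decay for each $u_{i\bar{j}}$ on balls $B_{r}\Subset\Omega$, from which Hölder continuity with some exponent $\alpha\in(0,1)$ follows by iteration on scales.

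The main obstacle is the genuinely complex nature of the operator: real second differences only see the real Hessian, so one must combine the real directions $\partial/\partial x_{k}$ and $\partial/\partial y_{k}$, using an identity of the form $u_{i\bar{j}}=\tfrac{1}{4}(u_{x_{i}x_{j}}+u_{y_{i}y_{j}})+\tfrac{\sqrt{-1}}{4}(u_{x_{i}y_{j}}-u_{y_{i}x_{j}})$, to express the full mixed Hessian as a positive linear combination of real second differences. Carrying out this reduction while tracking the dependence of the constants, and of $\alpha$ itself, on $K_{0},K_{1},K_{2},b,B_{0},B_{1}$ and on $\operatorname{dist}(\Omega',\partial\Omega)$ is the delicate accounting step; this is precisely the argument worked out in \cite{Bl}, which I would simply invoke to conclude the desired estimate $\|D^{2}u\|_{C^{\alpha}(\Omega')}\le C$.
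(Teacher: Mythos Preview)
The paper does not prove this theorem at all: it is quoted verbatim as Theorem~3.1 of \cite{Bl} and used as a black box to pass from the a~priori bounds (\ref{3.19}), (\ref{3.24}) to a $C^{2,\alpha}$ estimate in local foliated charts. There is no ``paper's own proof'' to compare your proposal against.

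Your sketch is a reasonable outline of how B{\l}ocki's result is actually established---concavity of $\log\det$, uniform ellipticity from $\Delta u\le K_2$ together with $\det(u_{i\bar j})\ge b$, and the complex Evans--Krylov/Krylov--Safonov machinery---and your closing sentence, that you ``would simply invoke'' \cite{Bl}, is precisely what the paper does. So in effect your proposal and the paper agree: neither reproves the estimate, both cite it.
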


In the foliated local coordinate patch $(-\epsilon_{0}, \epsilon_{0}) \times B_{2}(0)$, we work in $B_{2}(0).$ Combining (\ref{3.19}) and (\ref{3.24}), the $C^{2, \alpha}$ estimate follows from Theorem \ref{th3.3}. The transverse elliptic Schauder estimates give the higher order estimates.
\begin{equation}\label{3.20}
||u(t)||_{C^{k}_{B}(M)} \le C_{k}
\end{equation}
where $C_{k}$ is independent of $\epsilon$, for all $k \ge 0$ and $t \in I$.

\section{Proof of Theorem 1.2} 
\begin{proof}[Proof of Theorem 1.2]
If $-c^{B}_{1}(M, \mathcal{F_\xi})$ is not transverse nef, then 
\begin{equation}\label{assu}
\epsilon_{0} = \inf\{\epsilon > 0 | \exists \; u_{\epsilon} \in C^{\infty}_{B}(M) \;s.t. \;\; \epsilon d\eta - \rho^{T}+ \sqrt{-1} \partial_{B}\bar{\partial}_{B}u_{\epsilon} > 0 \}
\end{equation}
is positive.
For $\epsilon  > 0$, We consider a family of transverse Monge-Amp\`ere equations
\begin{equation}\label{MA2}
 ((\epsilon + \epsilon_{0})d\eta - \rho^{T}+ \sqrt{-1} \partial_{B}\bar{\partial}_{B} u_{\epsilon})^{n}\wedge \eta = e^{u_{\epsilon}}(d\eta)^{n}\wedge \eta.
\end{equation}

Denoting $ \sigma(\epsilon) = (\epsilon + \epsilon_{0})d\eta - \rho^{T}+ \sqrt{-1} \partial_{B}\bar{\partial}_{B} u_{\epsilon}$. We have uniformly $C^{2}$-estimate as Proposition \ref{C2}
\begin{equation}\label{C22}
\begin{split}
\sigma^{k\bar{l}}(\epsilon)\partial_{k}\partial_{\bar{l}} \log \tr_{\sigma(\epsilon)}d\eta &\ge \frac{1}{\tr_{\sigma(\epsilon)}d\eta} (\frac{n + 1}{2n}\kappa (\tr_{\sigma(\epsilon)}d\eta)^{2} - \tr_{\sigma(\epsilon)}d\eta + \frac{\epsilon_{0}}{n}(\tr_{\sigma(\epsilon)}d\eta)^{2})  \\& \ge -1 + \frac{\epsilon_{0}}{n}\tr_{\sigma(\epsilon)}d\eta,
\end{split}
\end{equation}
so we will have 
\begin{equation}\label{hg2}
 \frac{1}{C}d\eta \le \sigma(\epsilon) \le Cd\eta.
\end{equation}
 where $C$ is independent of $\epsilon$. The higher order estimates is the same argument as Theorem \ref{Th1}.
 \begin{equation}
 \|u_\epsilon\|_{C^{k}(M)} \le C_{k}.
 \end{equation}
  where $C_{k}$ is independent of $\epsilon$, for all $k \ge 0.$ By Ascoli-Arzel\`a theorem and a diagonal argument with (\ref{C22}) and (\ref{hg2}), we obtain that there exists a sequence $\epsilon _{i}\to 0$ such that $u_{\epsilon _{i}}$ converges smoothly to a basic function $u_{0}$ and $\sigma_{\epsilon_{i}}$ converges smoothly to a strictly positive basic $(1, 1)$ form  
     \begin{equation}
  \sigma_{0} = \epsilon_{0}d\eta - \rho^{T} + \sqrt{-1} \partial_{B}\bar{\partial}_{B}u_{0}. 
  \end{equation}
  
  Since $\sigma_{0}$ is a strictly transverse positive basis (1, 1) form and $M$ is compact, we know  
     \begin{equation}
  \sigma_{0} = (\epsilon_{0} - \epsilon)d\eta_{B} - \rho^{T} + \sqrt{-1} \partial_{B}\bar{\partial}_{B}u_{0}. 
  \end{equation}
    is also a strictly transverse positive basis (1, 1) form, when $\epsilon$ is enough small. This is contradictory to the assumption in (\ref{assu}). So $\epsilon_{0}$ is equal to 0, then $-c^{B}_{1}(M, \mathcal{F_\xi})$ is transverse nef.
\end{proof}

\section{Proof of Theorem 1.3} 

In this section, we will prove Theorem \ref{Th3} following the method in \cite{WY2}. Zhang (\cite{Z1}) generalized the exponential estimates of plurisubharmonic functions in K\"ahler geometry to the Sasakian situation.

\begin{lemma}[\cite{Z1}, Proposition 3.3]
Let $\sigma$ be a basic (1, 1) form on a compact Sasakian manifold  $(M, \xi, \eta, \Phi, g)$, and $\mathcal{H}(M, \xi, \eta, \Phi, g, \sigma) = \{ u \in C^{\infty}_{B}(M)|\sigma +
− \sqrt{-1} \partial_{B}\bar{\partial}_{B}u \ge 0\}.$ Then, there exist two positive constants $\alpha$ and $C$, where C depends only on $\alpha$ and the geometry of $(M, \xi, \eta, \Phi, g)$ and $\sigma$, such that
\begin{equation}
\int_{M}e^{- \alpha(u - \max_{M}u)}(d\eta)^{n}\wedge \eta \le C.
\end{equation}
\end{lemma}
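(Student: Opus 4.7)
The plan is to reduce the statement to the classical H\"ormander--Tian exponential estimate for plurisubharmonic functions on domains in $\mathbb{C}^n$, by exploiting the fact that basic functions on a Sasakian manifold descend to functions on the local leaf space of the Reeb foliation. First I would normalize $u$ so that $\max_M u = 0$, hence $u \le 0$, and cover $M$ by finitely many foliated charts $\{U_\alpha\}_{\alpha=1}^N$ diffeomorphic to $(-\epsilon_0, \epsilon_0) \times B_2(0)$ as in Section~2.2, arranged so that the shrunk charts $\{\tfrac{1}{2} U_\alpha\}$ still cover $M$. Since $\sigma$ is a closed basic $(1,1)$-form and $B_2(0)$ is a ball, the local basic $\partial_B\bar{\partial}_B$-lemma lets me write $\sigma = \sqrt{-1}\partial_B \bar{\partial}_B \phi_\alpha$ on $U_\alpha$ for a smooth basic potential $\phi_\alpha$, with $\|\phi_\alpha\|_{C^0}\le K$ uniformly in $\alpha$. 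Because $u$ and $\phi_\alpha$ are both basic, they descend to functions on $B_2(0)$, and $v_\alpha := \phi_\alpha + u$ is genuinely plurisubharmonic on $B_2(0)$.

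Next I would establish a uniform bound $\|u\|_{L^1(M,\,(d\eta)^n\wedge\eta)} \le A$ for all $u \in \mathcal{H}$ with $\max_M u = 0$. This is a standard normalized-psh compactness argument: if it fails, take $u_k$ with $\|u_k\|_{L^1} \to \infty$, renormalize $w_k = u_k / \|u_k\|_{L^1}$, and observe that on each chart $w_k + \phi_\alpha/\|u_k\|_{L^1}$ is plurisubharmonic with controlled $L^1$-norm. By the $L^1_{\mathrm{loc}}$-compactness of psh functions one can pass to a limit $w_\infty \le 0$, and a Hartogs/connectedness chain (propagating information from a chart containing the maximum point across overlapping charts, using the uniform bound on $\phi_\alpha - \phi_\beta$ on overlaps) forces $w_\infty \equiv 0$, contradicting $\|w_k\|_{L^1} = 1$.

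Once the $L^1$ bound is in hand, I would invoke the classical exponential estimate (H\"ormander's $L^2$-theory, or equivalently Tian's $\alpha$-invariant argument): there exist $\alpha > 0$ and $C'$ depending only on $A + K$ and the size of $B_2(0)$ such that
\[
\int_{B_1(0)} e^{-\alpha v_\alpha} \, dV \le C'
\]
for every $u \in \mathcal{H}$ normalized as above. Using $|\phi_\alpha| \le K$ one has $e^{-\alpha u} \le e^{\alpha K} e^{-\alpha v_\alpha}$. Integrating in the transverse direction over $(-\tfrac{1}{2}\epsilon_0, \tfrac{1}{2}\epsilon_0)$, and observing that on each foliated chart the volume form $(d\eta)^n \wedge \eta$ is equivalent to $dx \wedge dV_{\mathbb{C}^n}$ up to a uniform constant (by (2.5)), one obtains $\int_{\frac{1}{2}U_\alpha} e^{-\alpha u}\, (d\eta)^n \wedge \eta \le C''$. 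Summing over the finite cover $\{\tfrac{1}{2}U_\alpha\}$ yields the desired global bound.

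The main obstacle is the uniform $L^1$ bound in the second step; everything else is a direct transcription of the K\"ahler argument. The subtle point is that the local potentials $\phi_\alpha$ and $\phi_\beta$ differ on overlaps by a basic transversely pluriharmonic function, so the chain propagating the maximum across charts must be controlled uniformly in $u$; this requires a careful but standard use of the finite covering together with the uniform equivalence of transverse metrics.
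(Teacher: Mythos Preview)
The paper does not prove this lemma at all; it is quoted verbatim from Zhang \cite{Z1}, Proposition~3.3, and used as a black box in the proof of Theorem~1.3. So there is no ``paper's own proof'' to compare against, and your proposal is in effect a reconstruction of Zhang's argument.

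Your strategy is the correct one and is precisely the standard transcription of the K\"ahler $\alpha$-invariant estimate to the Sasakian setting: pass to foliated charts, descend basic data to the local leaf space, and apply the classical H\"ormander--Tian bound on balls in $\mathbb{C}^n$. Two small remarks. First, you assume $\sigma$ is $d_B$-closed in order to produce local potentials $\phi_\alpha$, but the statement as written does not assert this; the fix is easy, since $\sigma \le C\, d\eta$ on the compact $M$ and $d\eta = \sqrt{-1}\,\partial_B\bar\partial_B h$ locally, so $u + Ch$ is genuinely plurisubharmonic on $B_2(0)$ regardless of whether $\sigma$ is closed. Second, the uniform $L^1$ bound you obtain by a psh-compactness/Hartogs chain argument is correct but heavier than necessary: since $\Delta_B u = \Delta_g u \ge -\mathrm{tr}_{d\eta}\sigma \ge -C$ on basic functions, the Green's function representation at a maximum point $p$ with $u(p)=0$ gives
\[
0 = u(p) = \frac{1}{V}\int_M u\,(d\eta)^n\wedge\eta + \int_M G(p,\cdot)\,\Delta_g u\,(d\eta)^n\wedge\eta
\ge \frac{1}{V}\int_M u\,(d\eta)^n\wedge\eta - C\sup_M G(p,\cdot),
\]
which yields $\int_M |u|\,(d\eta)^n\wedge\eta \le A$ in one line. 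With that in hand, your Steps~3--5 go through exactly as written.
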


The next lemma is from \cite{WY2}. 
\begin{lemma}
Let $u$ be a negative basic $C^{2}$ function on a compact Sasakian manifold $(M, \xi, \eta, \Phi, g)$. Suppose the basic Lapalicain $\Delta_{B}u \ge - v$ for some continuous basic function $v$ on $M$. then 
\[\int_{M} |\nabla \log(-u)|^{2} (d\eta)^{n}\wedge\eta \le \frac{1}{\min_{M}{(-u)}}\int_{M}|v|(d\eta)^{n}\wedge\eta. \ \]
where $\nabla$ is the Levi-Civita connection corresponding to the Riemannian metric $g$.
\end{lemma}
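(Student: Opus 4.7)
The plan is to reduce the inequality to a pointwise identity obtained from the chain rule for the logarithm, and then to integrate that identity on the closed manifold. Set $w := \log(-u)$; since $u$ is basic and strictly negative, $w$ is a well-defined basic $C^{2}$ function on $M$. The chain rule applied in the Riemannian metric $g$ yields $\nabla w = \nabla u/u$ and
\[
\Delta_g w \;=\; \frac{\Delta_g u}{u} \;-\; \frac{|\nabla u|^{2}}{u^{2}} \;=\; \frac{\Delta_g u}{u} \;-\; |\nabla w|^{2},
\]
so pointwise we have the key identity $|\nabla w|^{2} = \Delta_g u / u - \Delta_g w$.

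Next I would integrate this identity against the volume form $(d\eta)^{n}\wedge\eta$, which is a positive constant multiple of the Riemannian volume form of $g$. Because $w$ is basic, the fact recorded in Section~2 that $\Delta_{B}w = \Delta_g w$ on basic functions applies, and the divergence theorem on the closed manifold $M$ gives
\[
\int_{M} \Delta_g w \,(d\eta)^{n}\wedge\eta \;=\; 0.
\]
Therefore the pointwise identity integrates to
\[
\int_{M} |\nabla w|^{2}\,(d\eta)^{n}\wedge\eta \;=\; \int_{M} \frac{\Delta_g u}{u}\,(d\eta)^{n}\wedge\eta.
\]

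To close the argument I would plug in the hypothesis $\Delta_{B} u \ge -v$. Since $u<0$, dividing by $u$ reverses the inequality and gives the pointwise bound $\Delta_g u / u \le -v/u = v/(-u)$. Estimating $v \le |v|$ and using $-u \ge \min_{M}(-u) > 0$ (the minimum is strictly positive because $M$ is compact and $u$ is strictly negative), we obtain $\Delta_g u / u \le |v|/\min_{M}(-u)$ everywhere, and integrating over $M$ produces exactly the inequality in the statement. There is no substantive obstacle; the only points that require care are the sign management when dividing by the negative quantity $u$, and invoking $\Delta_B = \Delta_g$ on the basic function $w$ in order to justify that its integral against $(d\eta)^{n}\wedge\eta$ vanishes.
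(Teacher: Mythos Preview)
Your proposal is correct and is essentially the same argument the paper invokes: the paper simply notes that $\Delta_B = \Delta_g$ on basic functions and refers to Lemma~4 of \cite{WY2}, whose proof is exactly the chain-rule/integration-by-parts computation you wrote out (using that $(d\eta)^n\wedge\eta$ is a constant multiple of $dV_g$ so that $\int_M \Delta_g w\,(d\eta)^n\wedge\eta=0$). The only step to be slightly more explicit about is that you use $\Delta_B u=\Delta_g u$ (not just for $w$) to convert the hypothesis $\Delta_B u\ge -v$ into $\Delta_g u\ge -v$ before dividing by $u$.
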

Because the basic Laplacian is equal to the Riemannian Laplacian $\Delta_{g}$ on basic functions, so the proof is the same as the argument in Lemma 4 (\cite{WY2}).

\begin{lemma}\label{Lem 5.3}
Let $\sigma$ be a basic (1, 1) form on a compact Sasakian manifold  $(M, \xi, \eta, \Phi, g)$, and $\mathcal{H}(M, \xi, \eta, \Phi, g, \sigma) = \{ u \in C^{\infty}_{B}(M)|\sigma +
− \sqrt{-1} \partial_{B}\bar{\partial}_{B}u \ge 0\}.$ Then $v \equiv u - \max_{M}u - 1$ satisfies 
\begin{equation}
\int_{M}|\log(- v)|^{2} (d\eta)^{n}\wedge \eta + \int_{M}|\nabla \log(- v)|^{2}(d\eta)^{n}\wedge \eta \le C.
\end{equation}
where $C > 0$ is a constant depending only on the geometry of $(M, \xi, \eta, \Phi, g)$ and $\sigma$. So any sequence 
\[\log(1 + \max_{M}u_{k} - u_{k}) \ \]
is relatively compact in $L^{2}_{B}(M)$, where $u_{k} \in (M, \xi, \eta, \Phi, g)$.
\end{lemma}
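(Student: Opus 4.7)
The plan is to assemble the two displayed estimates of the lemma from the two preceding ingredients: Zhang's exponential integrability estimate and the gradient bound lemma recalled just above. Throughout, note that $v \equiv u - \max_M u - 1 \le -1$, so $-v \ge 1$ and $\log(-v) \ge 0$ is well-defined and bounded below; replacing $u$ by $v$ shifts by a constant and preserves $\sigma + \sqrt{-1}\partial_{B}\bar{\partial}_{B} v = \sigma + \sqrt{-1}\partial_{B}\bar{\partial}_{B} u \ge 0$, so $v$ lies in $\mathcal{H}(M,\xi,\eta,\Phi,g,\sigma)$ as well.

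For the gradient term, I would apply the preceding lemma to $v$. The assumption $\sigma + \sqrt{-1}\partial_{B}\bar{\partial}_{B} v \ge 0$, together with the identification $\Delta_{B} v = \Delta_{g} v$ on basic functions and the fact that tracing a nonnegative basic $(1,1)$ form against $d\eta$ is nonnegative, yields $\Delta_{B} v \ge -\tr_{d\eta}\sigma$. Since $\tr_{d\eta}\sigma$ is a fixed continuous basic function, the preceding lemma applied to $v$ gives
\[
\int_{M} |\nabla \log(-v)|^{2}\,(d\eta)^{n}\wedge\eta \;\le\; \frac{1}{\min_{M}(-v)}\int_{M} |\tr_{d\eta}\sigma|\,(d\eta)^{n}\wedge\eta \;\le\; C,
\]
where I used $\min_{M}(-v) \ge 1$ in the last step. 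Here $C$ depends only on $(M,\xi,\eta,\Phi,g)$ and $\sigma$.

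For the $L^{2}$ bound on $\log(-v)$, I would invoke Zhang's exponential estimate applied to $u$, which gives $\int_{M} e^{-\alpha(u - \max_{M} u)}(d\eta)^{n}\wedge\eta \le C$ for some $\alpha, C > 0$ depending only on the data. Rewriting $u - \max_{M} u = v + 1$, this is equivalent to a uniform bound on $\int_{M} e^{-\alpha v}(d\eta)^{n}\wedge\eta$. Since $(\log t)^{2} = o(e^{\alpha t/2})$ as $t \to \infty$, there is a constant $C_{\alpha}$ with $(\log t)^{2} \le C_{\alpha}(1 + e^{\alpha t/2})$ for all $t \ge 1$; substituting $t = -v$ and using the exponential estimate with parameter $\alpha/2$ (which is also admissible in Zhang's lemma) yields
\[
\int_{M} |\log(-v)|^{2}\,(d\eta)^{n}\wedge\eta \;\le\; C_{\alpha}\!\left(\mathrm{Vol}(M) + \int_{M} e^{-\alpha v/2}(d\eta)^{n}\wedge\eta\right) \;\le\; C.
\]

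Combining the two estimates gives the displayed inequality of the lemma. For the final compactness assertion, set $w_{k} = \log(1 + \max_{M} u_{k} - u_{k}) = \log(-v_{k})$ with $v_{k} = u_{k} - \max_{M} u_{k} - 1$; the two bounds show that $\{w_{k}\}$ is uniformly bounded in the basic Sobolev space $W^{1,2}_{B}(M)$. Since $M$ is compact and basic functions are exactly the Reeb-invariant ones, Rellich--Kondrachov (applied on $M$ and restricted to the closed basic subspace) gives relative compactness of $\{w_{k}\}$ in $L^{2}_{B}(M)$. The only mildly subtle point, and the place I would have to take a little care, is the first one: that the inequality $\sigma + \sqrt{-1}\partial_{B}\bar{\partial}_{B} v \ge 0$ really does translate into a one-sided pointwise bound on the Riemannian Laplacian $\Delta_{g} v$ that can be fed into the gradient lemma; everything else is a direct application of the lemmas already quoted plus a standard Sobolev compact embedding argument.
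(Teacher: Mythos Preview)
Your proposal is correct and matches the intended argument. The paper itself does not supply a proof of this lemma; it simply states it after recording the two ingredients you use (Zhang's exponential estimate and the Wu--Yau gradient bound for $\log(-u)$), and your assembly of those ingredients---trace $\sigma+\sqrt{-1}\partial_B\bar\partial_B v\ge 0$ against $d\eta$ to get $\Delta_B v\ge -\tr_{d\eta}\sigma$, feed this into the gradient lemma with $\min_M(-v)\ge 1$, and dominate $(\log(-v))^2$ by $e^{-\alpha v}$ to invoke Zhang's bound---is exactly the proof in Wu--Yau \cite{WY2}, transported verbatim to the Sasakian setting.
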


\begin{proof}[Proof of Theorem 1.3]
By Theorem \ref{Th2}, we know that the transverse Monge-Amp\`ere equations
\begin{equation}\label{MA3}
\begin{split}
 (td\eta - \rho^{T}&+ \sqrt{-1} \partial_{B}\bar{\partial}_{B} u_{t})^{n}\wedge \eta = e^{u_{t}}(d\eta)^{n}\wedge \eta.\\
\\& td\eta - \rho^{T}+ \sqrt{-1} \partial_{B}\bar{\partial}_{B} u_{t} > 0\\
 \end{split}
\end{equation}
have solutions for any $t > 0.$ 
We denote $\sigma(t) = td\eta - \rho^{T}+ \sqrt{-1} \partial_{B}\bar{\partial}_{B} u_{t}$. Clearly,
\begin{equation}\label{5.11}
 \int_{M} (- c_{1}^{B}(M, \mathcal{F_{\xi}}))^{n}\wedge \eta = \lim \limits_{ t \to 0} \int_{M} \sigma(t)^{n}\wedge \eta = \lim \limits_{ t \to 0} \int_{M} e^{u_{t}}(d\eta)^{n}\wedge \eta.
\end{equation}
By inequality (\ref{key}) and Cauchy-Schwarz inequality we have 
\begin{equation}
\tr_{\sigma(t)}\sqrt{-1}\partial_{B}\bar{\partial}_{B}\log \tr_{\sigma(t)}d \eta \ge \frac{(n + 1)\kappa}{2}\exp(- \frac{\max_{M}u_{t}}{n}) - 1.
\end{equation}
Integrating inequality (\ref{key}) w.r.t volume element $\sigma(t)^{n}\wedge \eta$ we have
\begin{equation}\label{5.5}
\begin{split}
\exp(- \frac{\max_{M}u_{t}}{n}) &\le \frac{\int_{M}\sigma(t)^{n}\wedge \eta}{\frac{n + 1}{2}\int_{M}\kappa\sigma(t)^{n}\wedge \eta}
\\& \le  \frac{\int_{M}\exp(u_{t} - \max_{M}u_{t} - 1)(d\eta)^{n}\wedge \eta}{\frac{n + 1}{2}\int_{M}\kappa\exp(u_{t} - \max_{M}u_{t} - 1)(d\eta)^{n}\wedge \eta}
\end{split}
\end{equation} 
Since $t_{1}d\eta - \rho^{T}+ \sqrt{-1} \partial_{B}\bar{\partial}_{B} u_{t} > \sigma_{t} \ge 0$ for any $0 < t \le t_{1},$ by Lemma \ref{Lem 5.3}, we know the set 
\[ {\log(1 + \max_{M}u_{t} - u_{t})}; 0 < t \le t_{1} \ \]
is uniformly bounded in $W^{1, 2}_{B}$ and $\log(1 + \max_{M}u_{t_{i}} - u_{t_{i}})$ converges to $w$ almost  everywhere on $M.$ By Lebesgue dominated convergence theorem the right side of (\ref{5.5}) converges to $C > 0.$ So we have the lower bounder of $\max_{M}u_{t_{i}}$, the upper bound get from (\ref{MA3}) by maximum principle. Up to a sequence we can assume $u_{t_{i}}$ converges to $-e^{w} + c.$ Plugging these back to (\ref{5.11}), we prove Theorem 1.3.
\end{proof}

\section{Proof of Theorem 1.4} 
 In this section we will prove the Miyaoka-Yau inequality for basic Chern numbers holds on compact Sasakian manifolds with nonpositive transverse holomorphic sectional curvature. 
\begin{proof}[Proof of Theorem 1.4]
Let $(M, \xi, \eta, \Phi, g)$ be a $2n + 1$ dimensional compact Sasakian manifold with nonpositive transverse holomorphic sectional curvature. We can cover $M$ by finite foliated local coordinate charts $\{U_{\alpha}\}$. Assume $\sigma$ is a transverse strictly positive basic (1, 1) form. Then we have the induced metric on the contact bundle $D$.
\[ h^{T}(X, Y) = \sigma(X, \Phi Y). \ \]
Where $X, Y \in \Gamma(D).$ So we have a Riemann metric $g$ on $M$
\[ g = h^{T} + \eta \otimes \eta. \ \]

We can define a transverse connection $\nabla^{T}$
$$\nabla^{T}_{X}Y=
\begin{cases}
(\nabla_{X}Y)^{p}, & \text{$X \in D$}\\
[\xi,Y]^{p},& \text{$X = \xi$}
\end{cases}$$
where $Y$ is a section of $D$ and $X^{p}$ the projection of $X$ onto $D$,
Let $X_{i}$ be local foliate transverse frame on the contact bundle $D$, by argument of section (\ref{Foliation}), we have
\[ \nabla^{T}_{X_{i}}X_{j} = \Gamma^{k}_{ij} X_{k}, \ \]
 \[\Gamma^{i}_{jk} = \Gamma^{i}_{kj} = h^{i\bar{l}}\frac{\partial h_{j\bar{l}}}{\partial z^{k}},\ \]
 Where $X_{i},X_{j} \in D^{(1, 0)}$ and $X \in D$. By some computation, we know the transverse curvature of $\nabla^{T}$ is 
 \begin{equation}
 R^{T}_{i\bar{j}k\bar{l}} = - \partial_{i}\partial_{\bar{j}}h^{T}_{k\bar{l}} + (h^{T})^{p\bar{q}}\partial_{i}h_{k\bar{q}}^{T}\partial_{\bar{j}}h_{p\bar{l}}^{T} .
 \end{equation}
 From the proof of Theorem 1.1, we have a family of transverse strictly positive basic (1, 1) form $\sigma(t)$, $t > 0.$
 By direct calculation, we have
 \begin{equation}\label{6.2}
 \begin{split}
 &(2\pi)^{2}\int_{M}(2c_{2}^{B}(M, \mathcal{F_{\xi}}) - \frac{n}{n + 1}c_{1}^{B}(M, \mathcal{F_{\xi}})^{2})\wedge\frac{\sigma(t)^{n - 2}}{(n - 2)!}\wedge\eta  \\& = \int_{M}\{\tr(R^{T}(t)\wedge R^{T}(t)) - \frac{1}{n + 1}\tr R^{T}(t)\wedge \tr R^{T}(t)\}\wedge \frac{\sigma(t)^{n - 2}}{(n - 2)!}\wedge\eta \\& = \int_{M} |R^{T}(t)|^{2} - (S^{T}(t))^{2} - \frac{n + 2}{n + 1}(|\rho^{T}(t)|^{2} - (S^{T}(t))^{2}) \frac{\sigma(t)^{n}}{n!}\wedge\eta 
 \end{split}
 \end{equation}
 where $S^{T} =  (h^{T})^{i\bar{j}}(h^{T})^{k\bar{l}}R_{i\bar{j}k\bar{l}}$  
 . Locally, set 
 \[ Q(t)_{i\bar{j}k\bar{l}}  =  R^{T}(t)_{i\bar{j}k\bar{l}} - \frac{S^{T}(t)}{n(n + 1)}(h^{T}(t)_{i\bar{j}}h^{T}(t)_{k\bar{l}} + h^{T}(t)_{i\bar{l}}h^{T}(t)_{k\bar{j}}).\ \]
 By direct calculation, we have
 \begin{equation}\label{6.3}
 |Q(t)|^{2} = |R^{T}(t)|^{2} - \frac{2(S^{T}(t))^{2}}{n(n+1)}.
 \end{equation} 
 Combining (\ref{6.2}) and (\ref{6.3}), we have 
  \begin{equation}\label{6.4}
 \begin{split}
 &(2\pi)^{2}\int_{M}(2c_{2}^{B}(M, \mathcal{F_{\xi}}) - \frac{n}{n + 1}c_{1}^{B}(M, \mathcal{F_{\xi}})^{2})\wedge\frac{\sigma(t)^{n - 2}}{(n - 2)!}\wedge\eta  \\& = \int_{M}(|Q(t)|^{2} + \frac{n + 2}{n(n + 1)}(S^{T}(t) + n)^{2} - \frac{n + 2}{n + 1}|\rho^{T}(t) + \sigma(t)|^{2})\frac{\sigma(t)^{n}}{n!}\wedge\eta 
 \\& \ge \int_{M} (|Q(t)|^{2} - \frac{n + 2}{n + 1}|\rho^{T}(t) + \sigma(t)|^{2})\frac{\sigma(t)^{n}}{n!}\wedge\eta 
 \end{split}
 \end{equation}
 By the equation (\ref{MA}), We know 
 \begin{equation}\label{6.5}
 \rho^{T}(t) = td\eta - \sigma(t).
 \end{equation}
 Combining (\ref{6.4}) and (\ref{6.5}), let $t \to 0$ we have 
 \begin{equation}
 \begin{split}
&(2\pi)^{2}\int_{M}(2c_{2}^{B}(M, \mathcal{F_{\xi}}) - \frac{n}{n + 1}c_{1}^{B}(M, \mathcal{F_{\xi}})^{2})\wedge\frac{(- c_{1}^{B}(M, \mathcal{F_{\xi}}))^{n - 2}}{(n - 2)!}\wedge\eta \\& = \lim \limits_{ t \to 0}(2\pi)^{2}\int_{M}(2c_{2}^{B}(M, \mathcal{F_{\xi}}) - \frac{n}{n + 1}c_{1}^{B}(M, \mathcal{F_{\xi}})^{2})\wedge\frac{\sigma(t)^{n - 2}}{(n - 2)!}\wedge\eta \\& \ge  \lim \limits_{ t \to 0}\int_{M} (|Q|^{2} - \frac{n + 2}{n + 1}|\rho^{T}(t) + \sigma(t)|^{2})\frac{\sigma(t)^{n}}{n!}\wedge\eta \ge 0.
\end{split}
\end{equation}
\end{proof}

\subsection*{Acknowledgement}
The author thank his supervisor, Professor Xi Zhang, for leading me to the research of this problem and offering many helpful discussions. The author also thanks Huang,liding for conversations about PDE's problems in this paper and Wu,di for conversations about Sasakian geometry. This article had been submitted to the international journal of math on August 13. The Manuscript Number is IJM-D-21-00136. The author supported by the National Key R and D Program of China 2020YFA0713100.

\end{document}